 \numberwithin{equation}{section}
\newtheorem{theorem}{Theorem}[section]
\newtheorem{lemma}[theorem]{Lemma}
\newtheorem{proposition}[theorem]{Proposition}
\newtheorem{corollary}[theorem]{Corollary}
\theoremstyle{definition}
\newtheorem{example}[theorem]{Example}
\theoremstyle{remark}
\newtheorem{remark}[theorem]{Remark}
\newtheorem{remarks}[theorem]{Remarks}
\newcommand{\one}{\ensuremath{(\mathrm{i})}}
\newcommand{\two}{\ensuremath{(\mathrm{ii})}}
\newcommand{\three}{\ensuremath{(\mathrm{iii})}}
\newcommand{\CC}{\ensuremath{\mathbb{C}}} 
\newcommand{\kk}{\ensuremath{\Bbbk}} 
\newcommand{\NN}{\ensuremath{\mathbb{N}}} 
\newcommand{\QQ}{\ensuremath{\mathbb{Q}}} 
\newcommand{\RR}{\ensuremath{\mathbb{R}}}
\newcommand{\ZZ}{\ensuremath{\mathbb{Z}}} 
\newcommand{\GGm}
{\ensuremath{\mathbb{G}_m}}
\newcommand{\Cox}{\operatorname{Cox}}
\newcommand{\Gab}{G_{\operatorname{ab}}}
\newcommand{\git}{\ensuremath{\operatorname{\!/\!\!/\!}}}
\newcommand{\GL}{\operatorname{GL}}
\newcommand{\head}{\operatorname{h}}
\newcommand{\Hom}{\operatorname{Hom}}
\newcommand{\Pic}{\operatorname{Pic}}
\newcommand{\Proj}{\operatorname{Proj}}
\newcommand{\Hilb}{\operatorname{Hilb}}
\newcommand{\Rep}{\operatorname{Rep}}
\newcommand{\SL}{\operatorname{SL}}
\newcommand{\Spec}{\operatorname{Spec}}
\newcommand{\tail}{\operatorname{t}}
\newcommand{\sign}{\mathrm{sign}}
\newcommand{\ali}[1]{\textcolor{blue}{#1}}
\begin{document}

\title{The semi-invariant ring as the Cox ring of a GIT quotient}
%quiver variety

\author{Gwyn Bellamy}
\address{School of Mathematics and Statistics, University Place, Glasgow, G12 8QQ, Glasgow, UK.}
\email{gwyn.bellamy@glasgow.ac.uk}
\urladdr{https://www.gla.ac.uk/schools/mathematicsstatistics/staff/gwynbellamy/}

\author{Alastair Craw} 
\address{Department of Mathematical Sciences, 
University of Bath, 
Claverton Down, 
Bath BA2 7AY, 
UK.}
\email{a.craw@bath.ac.uk}
\urladdr{http://people.bath.ac.uk/ac886/}

\author{Travis Schedler} 
\address{Department of Mathematics, Imperial College London, South Kensington Campus, London, SW7 2AZ, UK}
\email{t.schedler@imperial.ac.uk}
\urladdr{https://www.imperial.ac.uk/people/t.schedler}

\begin{abstract}
 We study GIT quotients $X_\theta=V\git_\theta G$ whose linearisation map defines an isomorphism between the group of characters of $G$ and the Picard group of $X_\theta$ modulo torsion. Our main result establishes that the Cox ring of $X_\theta$ is isomorphic to the semi-invariant ring of the $\theta$-stable locus in $V$. This applies to quiver flag varieties, Nakajima quiver varieties, hypertoric varieties, and crepant resolutions of threefold Gorenstein quotient singularities with fibre dimension at most one. As an application, we present a simple, explicit calculation of the Cox ring of the Hilbert scheme of $n$-points in the affine plane.
\end{abstract}

\maketitle
\section{Introduction}
The Cox ring of a projective variety $X$ over an algebraically closed field $\kk$ of characteristic zero with finite Picard rank was introduced in the work of Hu--Keel~\cite{HuKeel00}. The multigraded $\kk$-algebra $\Cox(X)$ is built from the global sections of all line bundles on $X$. When $\Cox(X)$ is finitely generated as a $\kk$-algebra, it was shown to encode completely the birational geometry of $X$, in the sense that every small birational model of $X$ can be reconstructed by variation of GIT quotient for the action of an algebraic torus on the affine variety whose coordinate ring is $\Cox(X)$. The Cox ring played a fundamental role in the work of Birkar--Cascini--Hacon--McKernan~\cite{BCHM}, and a comprehensive study of the properties of $\Cox(X)$ was carried out by Arzhantsev--Derenthal--Hausen--Laface~\cite{ADHL}.

In general, it can be difficult to describe the Cox ring explicitly, or to prove that it is a finitely generated $\kk$-algebra.  Here, we consider this question in the setting of a GIT quotient $X_\theta:=V \git_\theta G$, where $V$ is an affine variety, $G$ is a reductive group and $\theta$ is a character of $G$. In fact, we consider the relative version $\Cox(X_\theta/Y)$ for the natural morphism $X_\theta \to Y:=V \git G$ to the affine GIT quotient (see Ohta~\cite{Ohta22}). Our main theorem describes this relative Cox ring of $X_\theta$ over $Y$ as a \emph{semi-invariant ring}, and we apply this to describe the Cox ring of every class of GIT quotient that we studied in our earlier paper \cite{BCS23}.

To state the main result, let $\theta\in G^\vee$ be a generic character of $G$ and let $C$ be the GIT chamber in the vector space $G^\vee_{\QQ}:= G^\vee\otimes_\ZZ \QQ$ containing $\theta$.
%and let $\theta\in C$. 
Then there is a geometric quotient $V^\theta\to X_\theta$, where $V^\theta$ is the locus of $\theta$-stable points in $V$. The \emph{linearisation map} for $C$ is the $\QQ$-linear map 
\[
L_C\colon G^\vee_\QQ \longrightarrow \Pic(X_\theta/Y)_\QQ
\]
that typically assigns to each character $\chi$, the line bundle $L_\chi$ on $X^\theta$ whose pullback to $V^\theta$ is the trivial line bundle with $G$-linearisation $\chi$, often denoted $\chi \otimes \mathcal{O}_{V^\theta}$. An integral version
\[
\ell_C\colon G^\vee \longrightarrow \Pic(X_\theta/Y)/\text{torsion}
\]
of the linearisation map exists under mild assumptions on the action of $G$. We are primarily interested in the case where $\ell_C$ is an isomorphism, in which case $G^\vee\cong \ZZ^\rho$ for some $\rho\geq 1$. Then our main theorem can be stated as follows (see Theorem~\ref{thm:Coxringisom}):

\begin{theorem}
\label{thm:Coxringisomintro}
 Let $\theta\in G^\vee$ be generic, and let $C$ denote the GIT chamber containing $\theta$. Assume that $\ell_C$ is an isomorphism. Then $\Cox(X_\theta/Y)$ is isomorphic as a $\ZZ^\rho$-graded ring to $\Gamma(\mathcal{O}_{V^{\theta}})^{[G,G]}$.
\end{theorem}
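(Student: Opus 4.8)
The plan is to read off both the grading group and each graded piece of $\Cox(X_\theta/Y)$ directly from the geometric quotient $\pi\colon V^\theta \to X_\theta$, and then to recognise the resulting direct sum as the semi-invariant ring $\Gamma(\mathcal{O}_{V^\theta})^{[G,G]}$. First I would fix the grading. Since $\ell_C$ is an isomorphism, $G^\vee \cong \Pic(X_\theta/Y)/\text{torsion}\cong \ZZ^\rho$; in particular $G^\vee$ is free, so the abelianisation $\Gab=G/[G,G]$ is a torus isomorphic to $\GGm^\rho$ with character lattice $G^\vee$. The assignment $\chi\mapsto L_\chi$, sending a character to the descent along $\pi$ of the $G$-linearised trivial bundle $\chi\otimes\mathcal{O}_{V^\theta}$, is a group homomorphism $G^\vee\to\Pic(X_\theta/Y)$ lifting $\ell_C$, since descent is compatible with tensor products and hence $L_\chi\otimes L_{\chi'}\cong L_{\chi+\chi'}$. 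I would use the sheaves $L_\chi$ as the distinguished representatives defining the $\ZZ^\rho$-grading, so that
\[
\Cox(X_\theta/Y)=\bigoplus_{\chi\in G^\vee}\Gamma(X_\theta,L_\chi),
\]
with surjectivity of $\ell_C$ ensuring no class modulo torsion is omitted and injectivity ensuring the grading does not collapse.

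The main step is to identify each graded piece. Because $\pi$ is an affine geometric quotient by $G$, pushing forward and taking $G$-invariants sends $\mathcal{O}_{V^\theta}$ to $\mathcal{O}_{X_\theta}$ and, more generally, the $G$-linearised trivial bundle $\chi\otimes\mathcal{O}_{V^\theta}$ to its descent $L_\chi$. Taking global sections, and using that $Y$ is affine so that the relative Cox ring is computed by honest global sections as $\Gamma(\mathcal{O}_Y)$-modules, yields the pullback isomorphism
\[
\Gamma(X_\theta,L_\chi)\;\cong\;\Gamma(V^\theta,\chi\otimes\mathcal{O}_{V^\theta})^G\;=\;\Gamma(\mathcal{O}_{V^\theta})_\chi,
\]
the space of $\chi$-semi-invariant functions on $V^\theta$. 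Summing over $\chi$ and invoking the weight-space decomposition of the residual $\Gab$-action,
\[
\Gamma(\mathcal{O}_{V^\theta})^{[G,G]}=\bigoplus_{\chi\in G^\vee}\Gamma(\mathcal{O}_{V^\theta})_\chi,
\]
which holds because $\Gab$ is a torus and a $[G,G]$-invariant function of weight $\chi$ is exactly a $\chi$-semi-invariant for $G$, gives the desired isomorphism of $\ZZ^\rho$-graded vector spaces. Finally I would check compatibility with multiplication: the Cox-ring product is induced by the canonical isomorphisms $L_\chi\otimes L_{\chi'}\cong L_{\chi+\chi'}$, and under $\pi^*$ this becomes the ordinary product of functions on $V^\theta$, which carries a $\chi$- and a $\chi'$-semi-invariant to a $(\chi+\chi')$-semi-invariant; hence the graded vector-space isomorphism is a ring isomorphism.

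I expect the principal obstacle to lie in the descent step together with the attendant role of torsion: one must verify that $\chi\otimes\mathcal{O}_{V^\theta}$ genuinely descends to a well-defined sheaf $L_\chi$ on $X_\theta$ whose sections are the $\chi$-semi-invariants. This is exactly where the stabilisers of $\theta$-stable points, acting on the fibres, can obstruct the descent of an honest line bundle and force one to work modulo torsion, which is precisely the hypothesis packaged into the statement that $\ell_C$ is an isomorphism. A secondary point to handle with care is that $\Gamma(X_\theta,L_\chi)$ must be matched with the semi-invariants on the stable locus $V^\theta$ rather than on all of $V$; since the theorem is phrased in terms of $\Gamma(\mathcal{O}_{V^\theta})$, this subtlety is already absorbed into the formulation and does not require a codimension hypothesis on $V\setminus V^\theta$.
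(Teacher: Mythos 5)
Your proposal is correct and takes essentially the same route as the paper: its proof composes Lemma~\ref{lem:equality} (the weight-space decomposition of $\Gamma(\mathcal{O}_{V^\theta})^{[G,G]}$ under the diagonalisable group $\Gab$) with Lemma~\ref{lem:sections} (descent plus the adjunction for $\pi_*(-)^G$ and $\pi^*$ identifying $\Gamma(L_\chi)$ with $\Gamma(\mathcal{O}_{V^\theta})_\chi$, with multiplicativity checked via restriction of rational functions), and then uses $\ell_C$ to re-index the sum by $\ZZ^\rho$ --- precisely your three steps. Your closing remark that the delicate point is descent of $\chi\otimes\mathcal{O}_{V^\theta}$, guaranteed because the very existence of the integral map $\ell_C$ presupposes that all stabilisers on $V^\theta$ lie in $[G,G]$, matches the paper's standing assumption made just before~\eqref{eqn:ellC}.
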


For $H:=[G,G]$, the abelianisation $\Gab:= G/H$-acts on the $H$-invariant subring  of $\Gamma(\mathcal{O}_{V^{\theta}})$, which is therefore graded by the character group $\Gab^\vee$. When viewed as a multigraded ring in this way, we call $\Gamma(\mathcal{O}_{V^{\theta}})^H$ the \emph{semi-invariant ring} for the action of $G$ on the $\theta$-stable locus $V^\theta$.

We explain in Section \ref{sec:examples} that Theorem~\ref{thm:Coxringisomintro} applies to several classes of examples, including quiver flag varieties, Nakajima quiver varieties and hypertoric varieties (under mild conditions), and crepant resolutions of Gorenstein threefold quotient singularities with fibre  dimension at most one. In addition, Theorem~\ref{thm:Coxringisomintro} is used in recent work of Heuberger--Kalashnikov~\cite{HeubergerKalashnikov24} to compute a SAGBI basis for the Cox ring of some special moduli spaces of quiver representations, and in forthcoming work of Hubbard~\cite{Hubbard24} to compute generators and relations for the Cox ring of  hyperpolygon spaces in arbitrary dimension. 

 One might hope to phrase Theorem~\ref{thm:Coxringisomintro} in terms of the $[G,G]$-invariant subring of the coordinate ring $\Gamma(\mathcal{O}_V)$ rather than $\Gamma(\mathcal{O}_{V^{\theta}})$. However, the following 
 example shows that this is not possible. %the case.
 
\begin{example}
\label{exa:intro}
 The multiplicative group $\GGm$ acts on $\Spec \kk[a,b,c,d]$ with weights $(1,1,-1,-1)$, and it restricts to an action on the affine variety $V : (ad-bc=0)\subset \mathbb{A}^4$. There is an isomorphism $T^*\mathbb{P}^1\cong V\git_\theta G$ for nonzero $\theta\in G^\vee$, and the morphism $V\git_\theta G\to V\git G$ coincides with the minimal resolution $T^*\mathbb{P}^1\to Y:(xz=y^2)\subset \mathbb{A}^3$ of the $A_1$-singularity. The $[G,G]$-invariant subalgebra of $\Gamma(V)$ is the coordinate ring $\kk[a,b,c,d]/(ad-bc)$ of $V$ because $[G,G]$ is trivial, and yet the Cox ring of the toric surface $T^*\mathbb{P}^1$ over $Y$ is a polynomial ring in three variables, so $\Cox(T^*\mathbb{P}^1)\not\cong \Gamma(V)^{[G,G]}$. The key point in this example is that the $\theta$-unstable locus in $V$ is of codimension one.
 \end{example}

The GIT construction from Example~\ref{exa:intro} coincides with the GIT construction of $T^*\mathbb{P}^1$ as a quiver variety, a hypertoric variety and, after taking the product with $\mathbb{A}^1$, it defines a crepant resolution of the Gorenstein threefold quotient singularity of type $\frac{1}{2}(1,1,0)$. Thus, it is essential to consider $V^\theta$ rather than $V$ in  all of these three cases, discussed in Examples~\ref{exa:Nakajimaquivervars}, \ref{exa:hypertoric} and \ref{exa:threefold} below.

In  Corollary~\ref{c:cox-subring} we establish a generalisation of Theorem~\ref{thm:Coxringisomintro}, which relates the Cox ring to the semi-invariant ring even when the map $\ell_C$ fails to be injective. It can be shown that injectivity often fails for Nakajima quiver varieties which have no simple $\Pi(Q)$-module of the given dimension vector, and for crepant resolutions of general Gorenstein threefold quotient singularities.

Under additional assumptions on $V\git H$ (see Corollary~\ref{cor:weakerass}), we deduce that the Cox ring of $X_\theta$ over $Y$ is finitely-generated. When this is the case, the construction of Hu--Keel~\cite{HuKeel00} (see also \cite{Ohta22}) enables one to reconstruct all small  birational models of $X_\theta$ from the auxiliary GIT problem given by the action of the diagonalisable group $\Gab$ on the affinisation of $V^\theta\git H$. This question was motivated by our recent work \cite{BCS23} where, in addition to assuming that $L_C$ is an isomorphism and $X_\theta$ is normal, we also imposed conditions \cite[Condition~3.3(2),(3)]{BCS23} which allowed us to reconstruct all small birational models of $X_\theta$ over $Y$ from the \emph{original} GIT problem of $G$ acting on $V$. That provided an alternative approach to deducing that the Cox ring is finitely generated without assumptions on $V^\theta\git H$, at least when $X_\theta$ is $\mathbb{Q}$-factorial (see \cite[Corollary~3.16]{BCS23}).

We conclude by applying Theorem~\ref{thm:Coxringisomintro} to calculate explicitly the Cox ring of the Hilbert scheme of $n$-points on the affine plane for each $n\geq 1$. In the case of globally generated line bundles, this recovers a result of Haiman computing their global sections, without requiring his deep vanishing theorem (see \cite[Remark~A.4]{GanGinzburg}). Here 
we draw on results of Ginzburg~\cite[Appendix]{GanGinzburg} on the almost commuting variety to compute the $H$-invariant subring of $\Gamma(\mathcal{O}_{V^{\theta}})$, and hence we compute the global sections of all line bundles on the Hilbert scheme by Theorem~\ref{thm:Coxringisomintro}.

\medskip

\noindent \textbf{Acknowledgements.} The first two authors were supported in part by Research Project Grant RPG-2021-149 from The Leverhulme Trust. The first author was also supported by EPSRC grants EP-W013053-1 and EP-R034826-1.

\section{The semi-invariant ring}
 \label{sec:semi-invariant}

Throughout, $\kk$ denotes an algebraically closed field of characteristic zero, and a \textit{variety} is a reduced, irreducible separated scheme of finite type over $\kk$. Let $G$ be a reductive algebraic group acting algebraically on an affine variety $V$ with coordinate ring $\kk[V]$. Let $U\subseteq V$ be a $G$-invariant, open subset. For $\chi\in G^\vee:=\Hom(G,\GGm)$, we write $\Gamma(U,\mathcal{O}_U)_\chi$ for the vector space of semi-invariant functions of weight $\chi$, i.e.\ the space of functions $f\in \Gamma(U,\mathcal{O}_U)$ satisfying $g\cdot f = \chi(g) f$ for all $g\in G$. The \emph{semi-invariant ring} for the action of $G$ on $U$ is the $G^\vee$-graded ring $\bigoplus_{\chi\in G^\vee} \Gamma(U,\mathcal{O}_U)_\chi$.
  
  The commutator subgroup $H:=[G,G]$ fits into a short exact sequence
  \begin{equation}
      \label{eqn:groupses}
  1\longrightarrow H\longrightarrow G\longrightarrow \Gab\longrightarrow 1,
  \end{equation}
   where $\Gab$ is the abelianisation of $G$. It follows that $\Gab$ acts on the $H$-invariant subring $\Gamma(U,\mathcal{O}_U)^H$, making $\Gamma(U,\mathcal{O}_U)^H$ into a $\Gab^\vee$-graded ring.
   Applying $\Hom(-,\GGm)$ to \eqref{eqn:groupses} defines an isomorphism $\iota\colon \Gab^\vee\to G^\vee$ of character groups.
  
\begin{lemma}
\label{lem:equality}
The semi-invariant ring $\bigoplus_{\chi\in G^\vee} \Gamma(U,\mathcal{O}_U)_\chi$ is isomorphic to the $\Gab^\vee$-graded ring $\Gamma(U,\mathcal{O}_U)^H$. 
\end{lemma}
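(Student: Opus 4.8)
The plan is to recognise both graded rings as the single ring $\Gamma(U,\mathcal{O}_U)^H$, graded by its weight-space decomposition under the residual $\Gab$-action, and to exploit the fact that $\Gab$ is diagonalisable.

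I would begin with the elementary observation that underlies the isomorphism $\iota$: since $\GGm$ is abelian, every character $\chi\in G^\vee$ annihilates all commutators and hence is trivial on $H=[G,G]$. This immediately yields one inclusion of graded pieces, for if $f\in\Gamma(U,\mathcal{O}_U)_\chi$ then $h\cdot f=\chi(h)f=f$ for all $h\in H$, so $\Gamma(U,\mathcal{O}_U)_\chi\subseteq\Gamma(U,\mathcal{O}_U)^H$ for every $\chi$; summing over $G^\vee$ and reindexing along $\iota$ gives an inclusion of $\Gab^\vee$-graded rings.

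Next I would set up the residual action and its weight decomposition. Because $H$ is normal, the $G$-action on $\Gamma(U,\mathcal{O}_U)$ preserves the $H$-invariant subring, on which $H$ acts trivially, so $\Gab=G/H$ acts; the action is independent of the chosen lift of $\bar g\in\Gab$ precisely because two lifts differ by an element of $H$, which acts trivially. This action is rational, as the $G$-action on the functions of the $G$-invariant open set $U$ is locally finite. Now I invoke the fact that, $G$ being reductive, its quotient $\Gab$ is reductive and abelian, hence diagonalisable; consequently every rational $\Gab$-module is the direct sum of its weight spaces. Applying this to $\Gamma(U,\mathcal{O}_U)^H$ gives
\[
\Gamma(U,\mathcal{O}_U)^H=\bigoplus_{\lambda\in\Gab^\vee}\bigl(\Gamma(U,\mathcal{O}_U)^H\bigr)_\lambda,
\]
where the $\lambda$-weight space consists of the $f$ with $\bar g\cdot f=\lambda(\bar g)f$ for all $\bar g$.

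Finally I would match the pieces. Unwinding the definition of $\iota$, a function lies in the $\lambda$-weight space exactly when $g\cdot f=\iota(\lambda)(g)f$ for all $g\in G$, i.e.\ exactly when $f\in\Gamma(U,\mathcal{O}_U)_{\iota(\lambda)}$; hence each weight space is precisely one semi-invariant piece, and the two decompositions agree as $\Gab^\vee$-graded rings. The only substantive input beyond this bookkeeping is the diagonalisability of $\Gab$: it is what guarantees that the $H$-invariants are \emph{exhausted} by the semi-invariant pieces, ruling out $H$-invariant functions that fail to be $G$-semi-invariant of some weight. I expect this to be the one step to state with care; the rest is the observation that characters kill commutators combined with the standard weight decomposition for diagonalisable groups.
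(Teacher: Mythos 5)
Your proposal is correct and follows essentially the same route as the paper: one inclusion from the fact that every character of $G$ is trivial on $H=[G,G]$, and the reverse from the weight-space decomposition of $\Gamma(U,\mathcal{O}_U)^H$ under the residual action of the reductive abelian (diagonalisable) quotient $\Gab$, matched along $\iota$. Your additional remarks on well-definedness of the residual action and rationality of the module make explicit points the paper leaves implicit, but the argument is the same.
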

\begin{proof}
 Each $\chi\in G^\vee$ satisfies $\chi(h)=1$ for all $h\in H$, so every $\chi$-semi-invariant function in $\Gamma(U,\mathcal{O}_U)$ is $H$-invariant. Conversely, the action of $G$ on $\Gamma(U,\mathcal{O}_U)^H$ factors through the quotient $\Gab$, which is a reductive group.
 Thus, the ring 
 $\Gamma(U,\mathcal{O}_U)^H$ decomposes as a sum over all subspaces $\Gamma(U,\mathcal{O}_U)^H_{\chi}$ for $\chi \in \Gab^\vee \cong G^\vee$. 
 \end{proof}

Let $\theta\in G^\vee$. A point $v\in V$ is \emph{$\theta$-semistable} if there exists $m>0$ and a semi-invariant function $f\in \kk[V]$ of weight $m\theta$ such that $f(v)\neq 0$. The \emph{$\theta$-semistable locus} $V^\theta\subseteq V$ is the $G$-invariant, open subset of $\theta$-semistable points. The \emph{$\theta$-unstable locus} is the closed subset of $V$ obtained as the complement of the $\theta$-semistable locus.

\begin{proposition}
\label{prop:sect2}
    The morphism $V^{\theta}\git H \to V \git H$ is an open immersion, and the algebra
$\Gamma(\mathcal{O}_{V^\theta\git H})$ is isomorphic to    \begin{equation}
    \label{e:cox-fla}
\Gamma(\mathcal{O}_{V^\theta})^H %(V\git_\theta G)
    = \bigcap_{f \in \bigcup_{m \geq 1} \kk[V]_{m \theta} \setminus \{0\}} \kk[V]^H[f^{-1}].
    \end{equation}
    Moreover, this algebra is finitely-generated over $\kk$ if the union of the irreducible components of the $\theta$-unstable locus of codimension one in $V \git H$ is the vanishing set of a global function.
  \end{proposition}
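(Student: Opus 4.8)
The plan is to treat the three assertions in turn, using throughout the observation that every semi-invariant $f\in\kk[V]_{m\theta}$ is automatically $H$-invariant: since $\theta\in G^\vee$ it kills $H=[G,G]$, so $f\in R:=\kk[V]^H$, and the basic open set $V_f:=\{v:f(v)\neq 0\}$ is saturated for the quotient map $\pi\colon V\to V\git H=\Spec R$. Here $R$ is finitely generated because $H$, being the derived subgroup of a reductive group, is semisimple and hence reductive, so Nagata's theorem applies.

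For the open immersion, I would write $V^\theta=\bigcup_f V_f=\pi^{-1}(W)$, where $W:=\bigcup_f D(f)$ is the union of the principal opens $D(f)\subseteq\Spec R$ as $f$ ranges over the nonzero elements of $\bigcup_{m\geq 1}\kk[V]_{m\theta}$. Thus $V^\theta$ is a $\pi$-saturated open subset, and since $H$ is reductive the restriction $\pi|_{V^\theta}\colon V^\theta\to W$ is again a good quotient; hence the good quotient of $V^\theta$ by $H$ exists and equals $W$, so the induced morphism $V^\theta\git H\cong W\hookrightarrow\Spec R=V\git H$ is the asserted open immersion. For the identification of functions, the good quotient gives $\Gamma(\mathcal{O}_{V^\theta\git H})=\Gamma(\mathcal{O}_{V^\theta})^H$ directly. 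To reach \eqref{e:cox-fla}, I would cover $V^\theta$ by the affine opens $V_f=\Spec\kk[V][f^{-1}]$ and take global sections inside the function field $\kk(V)$, giving $\Gamma(\mathcal{O}_{V^\theta})=\bigcap_f\kk[V][f^{-1}]$; since each $f$ is $H$-invariant, localisation at $f$ commutes with taking $H$-invariants, so $(\kk[V][f^{-1}])^H=R[f^{-1}]$, and intersecting yields $\Gamma(\mathcal{O}_{V^\theta})^H=\bigcap_f R[f^{-1}]$, consistent with Lemma~\ref{lem:equality}.

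The finite-generation claim is where I expect the real work. Write $Z:=(V\git H)\setminus W$ for the closed image of the unstable locus, let $Z_1$ be the union of its codimension-one components, and set $Z_{\geq 2}:=Z\setminus Z_1$, of codimension at least two. By hypothesis $Z_1=V(g)$ for some $g\in R$, so $W\subseteq D(g)=\Spec R[g^{-1}]$ with complement $D(g)\setminus W=Z_{\geq 2}$. The inclusion $R[g^{-1}]\subseteq\Gamma(\mathcal{O}_W)$ is immediate, and the plan is to prove the reverse inclusion, i.e.\ that every function regular on $W$ extends across $Z_{\geq 2}$; granting this, $\Gamma(\mathcal{O}_W)=\Gamma(\mathcal{O}_{D(g)})=R[g^{-1}]$ is finitely generated over $\kk$ because $R$ is.

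The main obstacle is precisely this extension step, which is an instance of algebraic Hartogs and rests on normality of $V\git H$ (guaranteed once $V$ is normal, since invariants of a reductive group preserve normality). Normality forces the polar locus of any rational function on $V\git H$ to be pure of codimension one; a function regular on $W$ therefore has its poles contained in $Z$ and hence, being divisorial, in the codimension-one part $Z_1=V(g)$, so that some power $g^N\phi$ is regular and $\phi\in R[g^{-1}]$. I would flag that this normality input is essential: without it, regular functions on $W$ need not extend over the higher-codimension strata of the unstable locus, and the clean identification with a single localisation — hence finite generation by this route — can fail.
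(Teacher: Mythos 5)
Your treatment of the open immersion and of the identification \eqref{e:cox-fla} is correct and essentially the paper's own argument: both proofs cover $V^\theta$ by the $H$-stable principal opens attached to the semi-invariants $f\in\kk[V]_{m\theta}$, use that each such $f$ is $H$-invariant so that taking $H$-invariants commutes with localisation at $f$, and read off $\Gamma(\mathcal{O}_{V^\theta})^H=\bigcap_f\kk[V]^H[f^{-1}]$ together with the open immersion $V^\theta\git H\hookrightarrow V\git H$.

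The finite-generation argument, however, has a genuine gap: you import normality of $V\git H$ (via normality of $V$), but the proposition assumes only that $V$ is a variety in the paper's sense, i.e.\ reduced and irreducible; normality is nowhere among the hypotheses. Your closing remark that ``this normality input is essential'' is exactly where the proposal goes wrong -- the conclusion of the proposition holds without it, and the paper's proof shows how. Let $R$ be the integral closure of $\kk[V]^H$ in its fraction field, so $\Spec R$ is the normalisation of $V\git H$; by Noether's theorem $R$ is a finite $\kk[V]^H$-module, hence every $\kk[V]^H$-submodule of $R$ is finitely generated. Set $S:=\Gamma(\mathcal{O}_{V^\theta\git H})\cap R$, a finitely generated $\kk[V]^H$-module and therefore a finitely generated algebra over it. Given $g\in\Gamma(\mathcal{O}_{V^\theta\git H})$, its polar locus on the \emph{normal} variety $\Spec R$ is pure of codimension one -- this is your Hartogs argument, now legitimately applied upstairs -- and is contained in the preimage of the unstable locus; since the normalisation map is finite, the divisorial components of that preimage lie over the codimension-one components of the unstable locus, i.e.\ over $V(f)$. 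Hence $gf^n\in R$ for some $n$, so $gf^n\in S$; and since $f$ is invertible on $V^\theta\git H$, one gets $\Gamma(\mathcal{O}_{V^\theta\git H})=S[f^{-1}]$, finitely generated over $\kk[V]^H$ and hence over $\kk$.

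Note in passing that your stronger identification $\Gamma(\mathcal{O}_W)=\kk[V]^H[g^{-1}]$ really does require normality; without it the correct unconditional statement is that $\Gamma(\mathcal{O}_{V^\theta\git H})$ is the localisation of the finite module $S$ sitting inside the normalisation, not of $\kk[V]^H$ itself. This is precisely why finite generation survives in the non-normal case even though your clean single-localisation description does not.
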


 \begin{remark}
   Clearly, if $V^\theta \git H$ is affine (equivalently, $V^\theta$ is affine)
   %% This is true because the quotient $V \to V\git H$ is affine, and $V^\theta$ is the preimage of its image because if $v \in V$ is $\theta$-semistable and $f(v) \neq 0$ for $f \in \kk[V]_{m\theta}$, then the same is true for any point in the closure of $H \cdot v$ as $f$ is $H$-invariant.
   then the algebra $\Gamma(\mathcal{O}_{V^\theta})^H$ is finitely generated.
 \end{remark}
 
\begin{proof} 
 The isomorphism $\Gamma(\mathcal{O}_{V^\theta\git H})\cong \Gamma(\mathcal{O}_{V^\theta})^H$ holds since $V^\theta\to V^\theta\git H$ is a categorical quotient. We have 
$\Gamma(\mathcal{O}_{V^\theta})\cong 
\bigcap_{f \in \bigcup_{m \geq 1} \kk[V]_{m \theta}\setminus \{0\}} \kk[V][f^{-1}]$. Each $f \in \bigcup_{m \geq 1} \kk[V]_{m \theta}\setminus \{0\}$ is $H$-invariant so $D(f):=\Spec \kk[V][f^{-1}]$ is a $H$-stable affine open subset of $V^{\theta}$ and so $V^{\theta} \git H$ has an open affine cover given by the subsets $D(f) \git H:=\Spec (\kk[V][f^{-1}])^H\cong \Spec (\kk[V]^H[f^{-1}])$. This shows that $V^{\theta}\git H \to V \git H$ is an open immersion and that equation \eqref{e:cox-fla} holds as claimed.

For the statement on finite generation, first let $R$ be the integral closure of $\kk[V]^H$ in its field of fractions, so that $\Spec R$ is the normalisation of $V \git H$.  Since $R$ is a finitely-generated module over $\kk[V]^H$ and $\kk[V]^H$ is Noetherian, every $\kk[V]^H$-submodule of $R$ is finitely generated.  Set $S := \Gamma(\mathcal{O}_{V^\theta \git H}) \cap R \subseteq R$.  Then $S$ is a finitely-generated module over $\kk[V]^H$ and in particular a finitely-generated algebra over it.  

Next let $f \in \kk[V]^H$ be a function whose vanishing locus is the union of the $\theta$-unstable divisors.  Let $g \in \Gamma(\mathcal{O}_{V^\theta \git H})$. Then for some $n\geq 1$, the function $g f^n$ has no poles along any divisors, hence $g f^n \in S$.  We conclude that $\Gamma(\mathcal{O}_{V^\theta \git H})$ is contained in the ring $S[f^{-1}]$, which is finitely generated  over $\kk[V]^H$.  Conversely, we have $S \subseteq \Gamma(\mathcal{O}_{V^\theta \git H})$ by definition, and $f^{-1}$ is regular on $V^\theta \git H$, so
 $\Gamma(\mathcal{O}_{V^\theta \git H}) = S[f^{-1}]$ is finitely generated over $\kk[V]^H$ and hence is finitely generated over $\kk$. 
\end{proof}

\begin{corollary}
\label{cor:weakerass}
     If $V \git H$ is $S_2$ and there are no $\theta$-unstable divisors,
   then the graded ring \eqref{e:cox-fla} is isomorphic to $\kk[V]^H$.
\end{corollary}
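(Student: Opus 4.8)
The plan is to reduce the claim to a standard extension property for regular functions across a codimension-two locus on an $S_2$ scheme. Write $W:=V\git H=\Spec\kk[V]^H$ and $U:=V^\theta\git H$. By Proposition~\ref{prop:sect2} the canonical morphism $j\colon U\hookrightarrow W$ is an open immersion, and $\Gamma(\mathcal{O}_U)\cong\Gamma(\mathcal{O}_{V^\theta})^H$ is exactly the $\Gab^\vee$-graded ring \eqref{e:cox-fla}. Since the restriction map $j^*\colon\kk[V]^H=\Gamma(\mathcal{O}_W)\to\Gamma(\mathcal{O}_U)$ is $G$-equivariant, hence $\Gab^\vee$-graded, and is always injective, it suffices to prove that $j^*$ is surjective; that is, every function regular on $U$ extends to $W$.

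First I would identify the closed complement $Z:=W\setminus U$ together with its codimension. Let $\pi\colon V\to W$ be the quotient morphism. The affine open cover of $U$ by the sets $D(f)\git H=\Spec\kk[V]^H[f^{-1}]$, with $f$ ranging over $\bigcup_{m\geq1}\kk[V]_{m\theta}\setminus\{0\}$ from the proof of Proposition~\ref{prop:sect2}, satisfies $\pi^{-1}(D(f)\git H)=D(f)\subseteq V^\theta$; taking the union over all such $f$ gives $\pi^{-1}(U)=V^\theta$. Hence $\pi^{-1}(Z)$ is the $\theta$-unstable locus, and since $\pi$ carries closed $H$-invariant subsets to closed subsets, $Z=\pi(V\setminus V^\theta)$ is closed in $W$. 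By definition the codimension-one components of $Z$ in $W$ are precisely the $\theta$-unstable divisors, so the hypothesis that there are none says exactly that $\operatorname{codim}_W(Z)\geq 2$.

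The crux is then the standard statement: if $W$ is a Noetherian scheme satisfying $S_2$ and $Z\subseteq W$ is closed with $\operatorname{codim}_W(Z)\geq2$, then $j^*\colon\Gamma(\mathcal{O}_W)\to\Gamma(\mathcal{O}_{W\setminus Z})$ is an isomorphism. I would prove this by local cohomology. Because $W$ is a variety (as $\kk[V]^H$ is a finitely generated domain, $H$ being reductive) it is catenary and equidimensional, so every $z\in Z$ has $\dim\mathcal{O}_{W,z}\geq\operatorname{codim}_W(Z)\geq2$; the $S_2$ condition then gives $\operatorname{depth}_{\mathcal{O}_{W,z}}\mathcal{O}_{W,z}\geq\min(2,\dim\mathcal{O}_{W,z})=2$. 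By Grothendieck's identification of the local-cohomological depth, $\operatorname{depth}_Z(\mathcal{O}_W)=\inf_{z\in Z}\operatorname{depth}_{\mathcal{O}_{W,z}}\mathcal{O}_{W,z}\geq2$, so $H^0_Z(\mathcal{O}_W)=H^1_Z(\mathcal{O}_W)=0$. Feeding this into the excision sequence
\[
0\to H^0_Z(\mathcal{O}_W)\to\Gamma(\mathcal{O}_W)\to\Gamma(\mathcal{O}_{W\setminus Z})\to H^1_Z(\mathcal{O}_W)
\]
yields $\Gamma(\mathcal{O}_W)\cong\Gamma(\mathcal{O}_{W\setminus Z})=\Gamma(\mathcal{O}_U)$, and combining with the identifications above gives the desired graded isomorphism $\Gamma(\mathcal{O}_{V^\theta})^H\cong\kk[V]^H$.

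I expect the only delicate point to be the bookkeeping in the second step: confirming $\pi^{-1}(U)=V^\theta$ and, crucially, that ``no $\theta$-unstable divisors'' is a statement about codimension inside $W=V\git H$ rather than inside $V$, which is what makes the $S_2$-property of $W$ the correct hypothesis. Once $\operatorname{codim}_W(Z)\geq2$ is in hand, the remainder is the routine local-cohomology computation above, with Noetherianity and the domain/variety structure of $W$ supplied by reductivity of $H$.
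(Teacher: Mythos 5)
Your proposal is correct and follows essentially the same route as the paper: the hypotheses say exactly that the complement of $V^\theta\git H$ in $V\git H$ has codimension at least two, and the $S_2$ property then forces every function on the semistable locus to extend, so restriction $\kk[V]^H\to\Gamma(\mathcal{O}_{V^\theta})^H$ is a graded isomorphism. The only difference is one of detail: the paper invokes this $S_2$-extension principle in a single sentence, whereas you supply its standard proof (depth $\geq 2$ along the unstable locus, vanishing of $H^0_Z$ and $H^1_Z$, and the local cohomology exact sequence) together with the correct bookkeeping that $\pi^{-1}(V^\theta\git H)=V^\theta$.
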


\begin{proof}
 The $\theta$-unstable locus in $V\git H$ is of codimension at least two and, since $V\git H$ is $S_2$, every function defined on the $\theta$-semistable locus extends to a function defined on all of $V\git H$.
\end{proof}

\section{Cox rings of GIT quotients}
 Let $\theta\in G^\vee$. A point $v\in V^\theta$ is \emph{$\theta$-stable} if the stabiliser $G_v$ is finite and the orbit $G\cdot v$ is closed in $V^\theta$. A character $\theta\in G^\vee$ is \emph{effective} if $V^\theta$ is non-empty, and an effective character $\theta$ is \emph{generic} if every $\theta$-semistable point of $V$ is $\theta$-stable. 
For any effective $\theta\in G^\vee_{\QQ}$, the GIT quotient
\[
X_\theta:= V\git_\theta G:= \Proj \Big(\bigoplus_{j\geq 0} \kk[V]_{j\theta}\Big)
\]
 is the categorical quotient of the $\theta$-semistable locus $V^\theta$ by the action of $G$.  If $\theta\in G^\vee$ is generic, then the natural morphism $\pi\colon V^{\theta}\to V\git_\theta G$ is a geometric quotient. The variety $V\git_\theta G$ is projective over the affine quotient $Y:=V\git_0 G = \Spec \kk[V]^G$, and we write 
 \[
 \tau\colon X_\theta\longrightarrow Y
 \]
 for the structure morphism obtained by variation of GIT quotient. The relative Picard group is $\Pic(X_\theta/Y):= \Pic(X_\theta)/\tau^*\Pic(Y)$, and we write $\Pic(X_\theta/Y)_{\QQ}:= \Pic(X_\theta/Y)\otimes_\ZZ \QQ$.
 
 Assume now that $\theta\in G^\vee$ is generic. For $\chi\in G^\vee$, consider the $G$-equivariant line bundle $\chi\otimes \mathcal{O}_{V^\theta}$ on the $V^\theta$ given by equipping the trivial line bundle with the action of $G$ on each fibre given by $\chi$; explicitly, the $G$-action on $V^\theta$ lifts to the action on $V^\theta\times \mathbb{A}^1$ given by $g\cdot (v,z) = (g\cdot z, \chi^{-1}(g) z)$. It follows that the space of $G$-invariant sections of $\chi\otimes \mathcal{O}_{V^\theta}$  is isomorphic to the space $\Gamma(\mathcal{O}_{V^\theta})_\chi$ of $\chi$-semi-invariant functions on $V^\theta$. By descent \cite{Nevins08}, $\chi\otimes \mathcal{O}_{V^\theta}$ descends to a line bundle on $V\git_\theta G$ if and only if the stabiliser of each point $v \in V^{\theta}$ is in the kernel of $\chi$. Since all stabilisers are finite, and there are only finitely many conjugacy classes of such stabilisers by \cite[Corollaire~3]{Luna}, there is some multiple $m \chi\otimes \mathcal{O}_{V^\theta}$ for $m>0$ that descends. We define $L_{\chi}:= \frac{1}{m}L_{m \chi}\in \Pic(V\git_\theta G)_{\mathbb{Q}}$ to be the corresponding fractional line bundle. 
  
 \begin{lemma}
 \label{lem:sections}
 Let $\theta\in G^\vee$ be generic. Assume that the stabiliser at each point of $V^\theta$ is trivial.  There is an isomorphism of $G^\vee$-graded $\kk$-algebras
 \[
 \bigoplus_{\chi\in G^\vee} \Gamma(L_{\chi})\cong
  \bigoplus_{\chi\in G^\vee}
\Gamma(\mathcal{O}_{V^\theta})_\chi
 \]
 \end{lemma}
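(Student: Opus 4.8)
The plan is to establish the claimed isomorphism degree by degree and then verify that the resulting maps respect multiplication and the $G^\vee$-grading. Because the stabiliser at each point of $V^\theta$ is trivial, the descent criterion of \cite{Nevins08} quoted above---that the stabilisers lie in the kernel of $\chi$---holds for every $\chi\in G^\vee$. Hence each equivariant line bundle $\chi\otimes\mathcal{O}_{V^\theta}$ descends to an honest line bundle $L_\chi\in\Pic(X_\theta)$ with no passage to a multiple, and the pullback along the geometric quotient $\pi\colon V^\theta\to X_\theta$ satisfies $\pi^*L_\chi\cong\chi\otimes\mathcal{O}_{V^\theta}$. It therefore suffices to produce, for each $\chi$, a natural isomorphism $\Gamma(L_\chi)\cong\Gamma(\mathcal{O}_{V^\theta})_\chi$.

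For the degree-$\chi$ statement I would run the standard push--pull argument for the quotient. Since $\pi$ is a categorical quotient, $(\pi_*\mathcal{O}_{V^\theta})^G=\mathcal{O}_{X_\theta}$, and the projection formula gives $(\pi_*\pi^*L_\chi)^G\cong L_\chi\otimes(\pi_*\mathcal{O}_{V^\theta})^G\cong L_\chi$, where $L_\chi$ pulls out of the invariants because $G$ acts trivially on the base. Taking global sections, and using that $G$ is reductive so that forming $G$-invariants commutes with $\Gamma$, yields
\[
\Gamma(L_\chi)\cong\Gamma\bigl(V^\theta,\pi^*L_\chi\bigr)^G=\Gamma\bigl(V^\theta,\chi\otimes\mathcal{O}_{V^\theta}\bigr)^G.
\]
Combining this with the identification of $G$-invariant sections of $\chi\otimes\mathcal{O}_{V^\theta}$ with the $\chi$-semi-invariant functions $\Gamma(\mathcal{O}_{V^\theta})_\chi$ (noted immediately before the statement) supplies the desired isomorphism in each degree.

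It remains to assemble these into an isomorphism of graded rings. On the geometric side, descent for the torsor $\pi$ is compatible with tensor products, so $L_\chi\otimes L_{\chi'}\cong L_{\chi+\chi'}$, matching $\pi^*(L_\chi\otimes L_{\chi'})\cong(\chi+\chi')\otimes\mathcal{O}_{V^\theta}$; the multiplication map $\Gamma(L_\chi)\otimes\Gamma(L_{\chi'})\to\Gamma(L_{\chi+\chi'})$ is thereby transported to ordinary multiplication of functions, which carries a $\chi$-semi-invariant and a $\chi'$-semi-invariant to a $(\chi+\chi')$-semi-invariant. The one point requiring care is exactly this compatibility: the descent isomorphisms $\pi^*L_\chi\cong\chi\otimes\mathcal{O}_{V^\theta}$ must be chosen coherently in $\chi$ so that the monoidal structure on equivariant line bundles is respected and the collection of degree-wise isomorphisms becomes a single $\kk$-algebra homomorphism. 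Since every object here is realised inside the ring $\Gamma(\mathcal{O}_{V^\theta})$ of functions on the torsor, with multiplication given by function multiplication, this coherence is automatic; the map is a bijection in each degree, and hence an isomorphism of $G^\vee$-graded $\kk$-algebras.
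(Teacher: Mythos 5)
Your proposal is correct and follows essentially the same route as the paper: descent of $\chi\otimes\mathcal{O}_{V^\theta}$ for every $\chi$ from triviality of stabilisers, a push--pull identification $\Gamma(L_\chi)\cong\Gamma(\mathcal{O}_{V^\theta})_\chi$ in each degree (the paper phrases this as adjunction for $\pi_*(-)^G$ and $\pi^*$ via \cite[Lemma~2.5]{Nevins08}, while your projection-formula computation with $(\pi_*\mathcal{O}_{V^\theta})^G=\mathcal{O}_{X_\theta}$ is the same standard argument), and compatibility with multiplication obtained by realising both sides inside the functions on $V^\theta$, exactly as the paper does via the identification of rational functions on $X_\theta$ with $G$-invariant rational functions on $V^\theta$. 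No gaps; your explicit attention to choosing the descent isomorphisms coherently in $\chi$ is a fair expansion of the paper's one-line justification of the ring structure.
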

 \begin{proof}
  The stabilisers on $V^\theta$ are trivial, so $\chi\otimes\mathcal{O}_{V^\theta}$ descends to $L_\chi \cong \pi_*(\pi^*L_\chi)^G\cong \pi_*(\chi\otimes \mathcal{O}_{V^\theta})^G$. Adjunction for $\pi_*(-)^G$ and $\pi^*(-)$ (see, for example, \cite[Lemma~2.5]{Nevins08}) 
 implies that the $\kk$-vector space $\Gamma(L_\chi)\cong \Hom(\mathcal{O}_{X_\theta},L_\chi)$ is isomorphic to
$\Hom(\pi^*\mathcal{O}_{X_\theta},\chi\otimes \mathcal{O}_{V^\theta})^{G} \cong \Gamma(\mathcal{O}_{V^\theta})_\chi$
for each $\chi\in G^\vee$. These isomorphisms are obtained by restriction from the identification of rational functions on $X_\theta$ with $G$-invariant rational functions on $V^\theta$, so the ring structures agree.
  \end{proof} 
   
The set of effective fractional characters is a closed, convex cone in $G^\vee_{\QQ}:= G^\vee\otimes_{\ZZ} \QQ$ that admits a wall-and-chamber structure, called the \emph{GIT fan} (see \cite[Section~2.2]{BCS23}).  The set of generic stability parameters $\theta$ in $\Theta$ decomposes as the union of (GIT) chambers, each of which is the interior of a top-dimensional cone in the GIT fan. For $\theta\in G^\vee$ generic, let $C$ denote the GIT chamber with $\theta\in C$. The \emph{linearisation map} for $V\git_\theta G$ is the $\QQ$-linear map 
 \begin{equation}
 \label{eqn:linearisationmap}
 L_C\colon G^\vee_{\QQ}\longrightarrow \Pic(X_\theta/Y)_{\QQ}
 \end{equation}
 defined by setting $L_C(\chi):= L_\chi$. 
 
 Assume that the $G$-equivariant line bundle $\chi\otimes \mathcal{O}_{V^\theta}$ descends to an actual (rather than a fractional) line bundle $L_\chi$ for all $\chi\in G^\vee$. That is, we assume that the stabiliser at each closed point of $V^\theta$ is contained in the commutator subgroup $H$.  Then 
 we obtain a $\ZZ$-linear map
 \begin{equation}
     \label{eqn:ellC}
\ell_C\colon G^\vee\longrightarrow \Pic(X_\theta/Y)/\text{torsion}
 \end{equation}
 such that $\ell_C\otimes_{\ZZ} \QQ = L_C$. 
 %This happens if and only if the $G$-equivariant line bundle $\chi\otimes \mathcal{O}_{V^\theta}$ descends for all $\chi\in G^\vee$, that is, when the stabiliser at each closed point of $V^\theta$ is contained in the commutator subgroup $H$.
 %{Observe that the source of \eqref{eqn:ellC} is a free and finitely-generated $\ZZ$-module, since we assumed $G$ was connected and reductive.}
 %is trivial. 

 We are primarily interested in the case where the 
 %integral version of the linearisation map, 
 map $\ell_C$ from \eqref{eqn:ellC} is an isomorphism. In this case, both $G^\vee$ and $\Pic(X_\theta/Y)/\text{torsion}$ are finitely generated and free,
 %so $G$ is connected\footnote{\gwyn{What about stupid examples like $G = G^{\circ} \times S$ where $S$ is a finite simple group?}}, 
 and we can
 %of finitely generated and free $\ZZ$-modules. 
% When this is the case, 
choose a finite sequence of line bundles $(L_1, \dots,L_\rho)$ whose isomorphism classes provide 
%it 
a $\ZZ$-linear basis for $\Pic(X_\theta/Y)/\text{torsion}$. 
%for $\Pic(X_\theta/Y)/\text{torsion}$. 
Following Hu--Keel~\cite{HuKeel00} in the projective setting, and Ohta~\cite[Section~6.2]{Ohta22} in the relative setting, define the \emph{Cox ring} of $X_\theta$ over $Y$ to be the $\ZZ^\rho$-graded $\kk$-algebra
 \[
 \Cox(X_\theta/Y) :=\bigoplus_{(m_1, \dots, m_\rho)\in \mathbb{Z}^\rho} \Gamma(L_1^{m_1}\otimes \dots \otimes L_{\rho}^{m_\rho}\big).
 \]
 If $Y=\Spec \kk$, 
 %then $\tau_*\big(L_1^{m_1}\otimes \dots \otimes L_{\rho}^{m_\rho}\big)\cong \Gamma(L_1^{m_1}\otimes \dots \otimes L_{\rho}^{m_\rho})$ as $\kk$-vector spaces, and
 then we simply write $\Cox(X_\theta)$ for the Cox ring of $X_\theta$ over $Y$.
 
 The question as to whether $\Cox(X_\theta/Y)$ is finitely generated as an $\mathcal{O}_Y$-algebra is independent of the choice of $\ZZ$-basis for $\Pic(X_\theta/Y)/\text{torsion}$. Moreover,  $\Cox(X_\theta/Y)$ is finitely generated as an $\mathcal{O}_Y$-algebra if and only if it is finitely generated as a $\kk$-algebra because the coordinate ring $\kk[V]^G$ of $Y$ is itself a finitely generated $\kk$-algebra. 
 % and $\kk[V]$ the $\mathcal{O}_Y$-module $\tau_*(L_1^{m_1}\otimes \dots \otimes L_{\rho}^{m_\rho})$ is finitely generated for each $(m_1, \dots, m_\rho)\in \ZZ^\rho$ because $\tau$ is proper.}

  \begin{theorem}
\label{thm:Coxringisom}
 Let $\theta\in G^\vee$ be generic, and let $C$ denote the GIT chamber containing $\theta$. Assume that $\ell_C$ is an isomorphism. Then $\Cox(X_\theta/Y)$ is isomorphic as a $\ZZ^\rho$-graded ring to 
 $\Gamma(\mathcal{O}_{V^{\theta}})^{H}$.
  \end{theorem}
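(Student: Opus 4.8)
The plan is to identify each graded piece of $\Cox(X_\theta/Y)$ directly with a space of semi-invariant functions and then invoke Lemma~\ref{lem:equality}. First I would use the hypothesis to pin down convenient representatives. Since $\ell_C\colon G^\vee\to\Pic(X_\theta/Y)/\text{torsion}$ is an isomorphism, I fix a $\ZZ$-basis $\chi_1,\dots,\chi_\rho$ of $G^\vee$; the classes $\ell_C(\chi_i)=[L_{\chi_i}]$ then form a $\ZZ$-basis of $\Pic(X_\theta/Y)/\text{torsion}$. Because $\ell_C$ is defined exactly when every $\chi\otimes\mathcal{O}_{V^\theta}$ descends, each $L_{\chi_i}$ is a genuine line bundle on $X_\theta$, so I would take $L_i:=L_{\chi_i}$ as the bundles defining $\Cox(X_\theta/Y)$.

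Next I would observe that $\chi\mapsto L_\chi$ is a group homomorphism $G^\vee\to\Pic(X_\theta)$: the assignment $\chi\mapsto\chi\otimes\mathcal{O}_{V^\theta}$ is a homomorphism into the $G$-equivariant Picard group of $V^\theta$, and descent along $\pi$ is compatible with tensor products. Hence $L_1^{m_1}\otimes\cdots\otimes L_\rho^{m_\rho}\cong L_{m_1\chi_1+\cdots+m_\rho\chi_\rho}$ for all $\mathbf{m}\in\ZZ^\rho$, and since $\mathbf{m}\mapsto\sum_i m_i\chi_i$ is a bijection $\ZZ^\rho\xrightarrow{\sim}G^\vee$, reindexing yields an isomorphism of graded rings
\[
\Cox(X_\theta/Y)=\bigoplus_{\mathbf{m}\in\ZZ^\rho}\Gamma\big(L_1^{m_1}\otimes\cdots\otimes L_\rho^{m_\rho}\big)\;\cong\;\bigoplus_{\chi\in G^\vee}\Gamma(L_\chi),
\]
where on the right the grading lattice $G^\vee$ is identified with $\ZZ^\rho$ through $(\chi_i)$.

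The core of the argument is the degreewise isomorphism $\Gamma(L_\chi)\cong\Gamma(\mathcal{O}_{V^\theta})_\chi$, which I would establish exactly as in the proof of Lemma~\ref{lem:sections}: writing $L_\chi\cong\pi_*(\chi\otimes\mathcal{O}_{V^\theta})^G$ and applying adjunction for $\pi_*(-)^G$ and $\pi^*$ identifies $\Gamma(L_\chi)=\Hom(\mathcal{O}_{X_\theta},L_\chi)$ with $\Gamma(\mathcal{O}_{V^\theta})_\chi$, these identifications being realised by restriction of rational functions from $X_\theta$ to $G$-invariant rational functions on $V^\theta$, so that they assemble into a ring map. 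The hard part is that Lemma~\ref{lem:sections} is stated assuming trivial stabilisers, whereas here genericity of $\theta$ only makes the stabilisers on $V^\theta$ finite, and the existence of $\ell_C$ only places them in $H$. This is the step I expect to be the main obstacle, and I would resolve it by recalling that $\chi\otimes\mathcal{O}_{V^\theta}$ descends (in the sense of \cite{Nevins08}) as soon as each stabiliser acts trivially on the fibre, i.e.\ $G_v\subseteq\ker\chi$; for $\chi\in G^\vee$ this holds because $G_v\subseteq H\subseteq\ker\chi$, so the descent formula and the section identification go through with $\ker\chi$ in place of triviality.

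Finally I would assemble the pieces. Composing the two graded-ring isomorphisms gives $\Cox(X_\theta/Y)\cong\bigoplus_{\chi\in G^\vee}\Gamma(\mathcal{O}_{V^\theta})_\chi$, the semi-invariant ring for the $G$-action on $V^\theta$, which by Lemma~\ref{lem:equality} is the $\Gab^\vee$-graded ring $\Gamma(\mathcal{O}_{V^\theta})^H$. Transporting the grading through $\ZZ^\rho\cong G^\vee$ and the isomorphism $\iota\colon\Gab^\vee\to G^\vee$ shows the identification is compatible with the gradings, which completes the proof.
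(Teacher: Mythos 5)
Your proposal is correct and follows essentially the same route as the paper: the paper's proof is exactly the composition of Lemma~\ref{lem:equality}, the section identification $\Gamma(L_\chi)\cong\Gamma(\mathcal{O}_{V^\theta})_\chi$ from Lemma~\ref{lem:sections}, and the reindexing of the grading through the isomorphism $\ell_C$. Your extra care in relaxing the trivial-stabiliser hypothesis of Lemma~\ref{lem:sections} to $G_v\subseteq H\subseteq\ker(\chi)$ --- which is precisely what the existence of $\ell_C$ guarantees --- correctly fills in a point the paper leaves implicit, as confirmed by its later discussion preceding Corollary~\ref{c:cox-subring}, where descent is tracked character by character.
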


\begin{proof}
%We construct a commutative diagram of graded ring homomorphisms 
% \begin{equation}
%     \label{eqn:Coxdiagram}
%\begin{tikzcd}
%\kk[V]^{H} \ar[dr,"\phi"]  \ar[d, swap, "j^*"] &  \\
%\Gamma(\mathcal{O}_{V^{\theta}})^{H} \ar[r,"\psi"] & \Cox(X_\theta/Y),
%\end{tikzcd}
%\end{equation}
%  where $j\colon V^\theta\hookrightarrow V$ is the open immersion and $\psi$ is an isomorphism. 
First, consider the isomorphism of graded $\kk$-algebras 
 \begin{equation}
     \label{eqn:gradedone}
 \psi_0\colon \Gamma(\mathcal{O}_{V^{\theta}})^{H} 
 \longrightarrow 
  \bigoplus_{\chi\in G^\vee} \Gamma(L_{\chi})
  \end{equation}
  obtained as the composition of the graded $\kk$-algebra isomorphisms from Lemma~\ref{lem:equality} and Lemma~\ref{lem:sections}, where the isomorphism of character groups $\iota\colon \Gab^\vee\to G^\vee$ identifies the natural $\Gab^\vee$-grading on the domain of $\psi_0$ with the $G^\vee$-grading on the codomain. The isomorphism $\ell_C$ identifies $G^\vee$ with $\Pic(X_\theta/Y)/\text{torsion}\cong \ZZ^\rho$, so we obtain an isomorphism  
\begin{equation}
    \label{eqn:gradedtwo}
\bigoplus_{\chi\in G^\vee} \Gamma(L_{\chi})\longrightarrow \Cox(X_\theta/Y)
\end{equation}
 of $\ZZ^\rho$-graded rings. Define $\psi$ to be the composition of the graded $\kk$-algebra homomorphisms \eqref{eqn:gradedone} and \eqref{eqn:gradedtwo}. % and define $\phi:=\psi\circ j^*$ to ensure that the diagram commutes. 
 %Since \eqref{eqn:gradedone} is an isomorphism, $\psi$ is an isomorphism (resp.\ is injective or surjective) if and only if \eqref{eqn:gradedtwo} is an isomorphism (resp.\ is injective or surjective) which holds if and only if $\ell$ is bijective (resp.\ is injective or surjective).  
 \end{proof}

Putting this together with the results of the previous subsection, we conclude:

\begin{corollary} 
\label{cor:Cox}
Suppose that the assumptions of Theorem~\ref{thm:Coxringisom} hold. Then $\Cox(X_\theta/Y)$ is given by \eqref{e:cox-fla}. Moreover:
    \begin{enumerate}
        \item If the union of the irreducible components of the $\theta$-unstable locus of codimension one in $V\git H$ is the vanishing locus of a global function, then $\Cox(X_\theta/Y)$ is a finitely-generated $\kk$-algebra.
        \item If the quotient $V \git H$ is $S_2$ and the $\theta$-unstable locus has codimension at least two, then $\Cox(X_\theta/Y) \cong \kk[V]^H$.
    \end{enumerate}
\end{corollary}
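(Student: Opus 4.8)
The plan is simply to assemble the three results proved so far, since all the substantive work has already been done. Under the hypotheses of Theorem~\ref{thm:Coxringisom}, that theorem supplies an isomorphism of $\ZZ^\rho$-graded $\kk$-algebras between $\Cox(X_\theta/Y)$ and $\Gamma(\mathcal{O}_{V^\theta})^H$. Proposition~\ref{prop:sect2} then identifies $\Gamma(\mathcal{O}_{V^\theta})^H$ with the intersection \eqref{e:cox-fla}. Composing these two isomorphisms immediately yields the first assertion, that $\Cox(X_\theta/Y)$ is given by \eqref{e:cox-fla}.

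For part~(1), I would appeal directly to the finite-generation clause of Proposition~\ref{prop:sect2}: its hypothesis is precisely the one stated here, namely that the union of the codimension-one irreducible components of the $\theta$-unstable locus in $V\git H$ is cut out by a single global function. That clause shows the algebra \eqref{e:cox-fla} is a finitely generated $\kk$-algebra, and since finite generation is preserved under isomorphism of $\kk$-algebras, transporting this conclusion along the isomorphism of the first paragraph gives the finite generation of $\Cox(X_\theta/Y)$.

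For part~(2), the key point is that the hypothesis ``the $\theta$-unstable locus has codimension at least two in $V\git H$'' is a reformulation of the condition ``there are no $\theta$-unstable divisors'' appearing in Corollary~\ref{cor:weakerass}. Together with the $S_2$ assumption on $V\git H$, that corollary shows the algebra \eqref{e:cox-fla} is isomorphic to $\kk[V]^H$, and again composing with the isomorphism of the first paragraph produces $\Cox(X_\theta/Y)\cong \kk[V]^H$. The only point requiring genuine attention throughout is to track that every isomorphism respects the multigrading, so that the grading on $\Cox(X_\theta/Y)$ induced by the basis $(L_1,\dots,L_\rho)$ is carried to the natural $\Gab^\vee\cong G^\vee\cong \ZZ^\rho$-grading on the semi-invariant ring under $\ell_C$; beyond this bookkeeping, and the routine check that the two codimension hypotheses are phrased for the same space $V\git H$, I do not expect any obstacle.
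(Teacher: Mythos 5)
Your proposal is correct and matches the paper exactly: the paper gives no separate proof of Corollary~\ref{cor:Cox}, introducing it with ``Putting this together with the results of the previous subsection, we conclude,'' i.e.\ precisely your assembly of Theorem~\ref{thm:Coxringisom} with Proposition~\ref{prop:sect2} for the main claim and part~(1), and with Corollary~\ref{cor:weakerass} for part~(2). Your added bookkeeping about gradings and about reading the codimension hypothesis in $V\git H$ is accurate and consistent with the paper's statements.
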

  
The hypotheses in Theorem~\ref{thm:Coxringisom} are stronger than necessary; they were chosen for simplicity, and they are satisfied in the examples from Section~\ref{sec:examples}. We now 
%In this section, 
explain the general situation.
%Let $G$ be a (possibly disconnected) reductive group, still acting on an affine variety $V$.  Assume that $\theta \in C \subseteq G^\vee_\QQ$ is generic.

We continue to assume that $\theta\in G^\vee$ is generic, with $C$ the chamber containing $\theta$. This gives rise to the linear map $L_C$ as described above. The restriction of $L_C$ to $G^\vee$ factors through $G^\vee/\text{torsion}$, which is finitely generated and free. Let $\Lambda \leq G^\vee$ be the full-rank sublattice of characters $\chi$ such that $\chi \otimes \mathcal{O}_{V^\theta}$ descends to $X_\theta$, i.e., the stabilisers of $G$ on $V^\theta$ are in the kernel of $\chi$. Then $\Lambda$ is a finite-index subgroup of $G^\vee/\text{torsion}$. The map $L_C$ yields a homomorphism:
\[
\ell_C: \Lambda \twoheadrightarrow  \Pi :=  L_C(\Lambda) \subseteq
\Pic(X_\theta/Y)/\text{torsion}.
\]
Observe that $\Pi$ is the subgroup of $\Pic(X_\theta/Y)/\text{torsion}$ of classes obtained by descent. Let $\Lambda'$ be a complement in $\Lambda$ to $\ker(\ell_C)$. Then $\ell_C$ restricts to an isomorphism $\Lambda' \to \Pi$.  Define
\begin{equation}
H' := \bigcap_{\chi \in \Lambda'} \ker(\chi) \leq G.
\end{equation}
Note that $H \leq H' \leq G$. Since $H=[G,G]$ is reductive, and $G/H=\Gab$ is reductive and abelian, we deduce that $H'/H$ and hence $H'$ is reductive. 
%and $H'$ is reductive\footnote{\ali{How to see this last assertion?}}.
Then the proof of Theorem \ref{thm:Coxringisom} yields an isomorphism
\begin{equation} \label{e:h0-iso-general}
\Gamma(\mathcal{O}_{V^\theta})^{H'} \cong \bigoplus_{\chi \in \Lambda'} \Gamma(\mathcal{O}_{V^\theta})_\chi \cong \bigoplus_{L \in \Pi} \Gamma(L) \subseteq \Cox(X_\theta/Y).
\end{equation}
 We conclude, without assumptions on $L_C$:

\begin{corollary}\label{c:cox-subring}
The subring of $\Cox(X_\theta/Y)$ 
graded by $\Pi$
%classes of $\Pic(X_\theta/Y)/\text{torsion}$ obtained by descent 
is isomorphic to $\Gamma( \mathcal{O}_{V^\theta})^{H'}$. If every class in $\Pic(X_\theta/Y)/\text{torsion}$ is obtained by descent from a character of $G$, then $\Cox(X_\theta/Y) \cong \Gamma(\mathcal{O}_{V^\theta})^{H'}$.   
%, for $H'$  an explicit\footnote{\ali{I'm suggesting to define $H'$ in an equation as above so we can refer to it and cut half of this sentence.}} reductive subgroup of $G$ containing the commutator subgroup.
\end{corollary}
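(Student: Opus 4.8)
The plan is to read both assertions off the isomorphism \eqref{e:h0-iso-general}, which the preceding discussion has already produced by rerunning the proof of Theorem~\ref{thm:Coxringisom} with the reductive group $H'$ in place of $H$ and the sublattice $\Lambda'$ in place of $G^\vee$. Essentially all of the mathematical content therefore sits in \eqref{e:h0-iso-general}; what remains for the corollary is to recognise its right-hand side as a \emph{graded subring} of $\Cox(X_\theta/Y)$ and to identify the hypothesis of the second sentence with the condition $\Pi=\Pic(X_\theta/Y)/\text{torsion}$.

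First I would establish the first assertion. Since $\Pi=\ell_C(\Lambda)$ is a subgroup of the grading group $\Pic(X_\theta/Y)/\text{torsion}\cong\ZZ^\rho$, the sum $\bigoplus_{L\in\Pi}\Gamma(L)$ of the corresponding homogeneous components is closed under multiplication and hence is a $\Pi$-graded subring of $\Cox(X_\theta/Y)$; the isomorphism \eqref{e:h0-iso-general} then says precisely that this subring is isomorphic, as a graded ring, to $\Gamma(\mathcal{O}_{V^\theta})^{H'}$. To be thorough I would spell out its two halves. For the first, $H'$ is reductive and, because $H'=\bigcap_{\chi\in\Lambda'}\ker(\chi)$ is the common kernel of the free lattice $\Lambda'$, the quotient $G/H'$ is a torus whose character lattice is exactly $\Lambda'$ (no extra weights are trivial on $H'$); the weight-space decomposition of the $H'$-invariants under $G/H'$ therefore reads $\Gamma(\mathcal{O}_{V^\theta})^{H'}=\bigoplus_{\chi\in\Lambda'}\Gamma(\mathcal{O}_{V^\theta})_\chi$, exactly as in Lemma~\ref{lem:equality}. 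For the second, each $\chi\in\Lambda'$ descends by the definition of $\Lambda$, so the adjunction argument of Lemma~\ref{lem:sections} still identifies $\Gamma(\mathcal{O}_{V^\theta})_\chi$ with $\Gamma(L_\chi)$, and since $\ell_C$ restricts to an isomorphism $\Lambda'\xrightarrow{\sim}\Pi$ the classes $L_\chi$ range exactly over $\Pi$; compatibility with the ring structures is inherited verbatim from the proof of Theorem~\ref{thm:Coxringisom}.

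Finally I would deduce the second assertion. By construction $\Pi$ is the subgroup of $\Pic(X_\theta/Y)/\text{torsion}$ consisting of the classes obtained by descent from a character of $G$, so the hypothesis that every such class arises by descent is literally the equality $\Pi=\Pic(X_\theta/Y)/\text{torsion}$. Under this equality the $\Pi$-graded subring $\bigoplus_{L\in\Pi}\Gamma(L)$ coincides with the full direct sum $\bigoplus_{L\in\Pic(X_\theta/Y)/\text{torsion}}\Gamma(L)=\Cox(X_\theta/Y)$, whence the first assertion upgrades to $\Cox(X_\theta/Y)\cong\Gamma(\mathcal{O}_{V^\theta})^{H'}$. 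There is no serious obstacle once \eqref{e:h0-iso-general} is in hand; the only points demanding a little care are verifying that $\bigoplus_{L\in\Pi}\Gamma(L)$ is genuinely multiplicatively closed (which uses that $\Pi$ is a subgroup, not merely a subset of descending classes) and that the $\Lambda'$-grading on $\Gamma(\mathcal{O}_{V^\theta})^{H'}$ transports to the $\Pi$-grading on the subring under the isomorphism $\ell_C|_{\Lambda'}$.
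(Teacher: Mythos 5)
Your proposal is correct and takes essentially the same route as the paper, which gives no standalone proof of the corollary but deduces it directly from the isomorphism \eqref{e:h0-iso-general} together with the observation that $\Pi$ is, by definition, the subgroup of classes in $\Pic(X_\theta/Y)/\text{torsion}$ obtained by descent. The details you supply --- that $G/H'$ has character group exactly $\Lambda'$ so the weight decomposition of Lemma~\ref{lem:equality} applies, that each $\chi\in\Lambda'$ descends so the adjunction of Lemma~\ref{lem:sections} goes through, and that $\bigoplus_{L\in\Pi}\Gamma(L)$ is multiplicatively closed because $\Pi$ is a subgroup --- are precisely the points the paper leaves implicit when asserting \eqref{e:h0-iso-general} via ``the proof of Theorem~\ref{thm:Coxringisom}''.
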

%\travis{The left-hand side of \eqref{e:h0-iso-general} is finitely generated\footnote{\ali{This requires the condition in the second sentence of Prop~2.2, as stated in the corollary to follow.}} by replacing $H$ by $H'$ in Proposition~\ref{prop:sect2}. 
This 
%The inclusion
%Since the last inclusion 
%in \eqref{e:h0-iso-general} 
%is of the subalgebra concentrated in a subgroup of the grading group, 
allows us to deduce a condition for the Cox ring to be finitely generated:
%Suppose now that the LHS of \eqref{e:h0-iso-general} is finitely-generated. To deduce from this that the Cox ring is finitely generated, we need 
%$\Pi \leq \Pic(X_\theta/Y)/\text{torsion}$ to have finite index. This happens if and only if $L_C$ is surjective. Combining this with the main results in this section gives the following:

%i.e., every line bundle on $X_\theta$ is obtained by descent from $V_\theta$, up to tensoring by a bundle representing a torsion class in $\Pic(X_\theta/Y)$.  
%We deduce the following from the proofs of our main results:
\begin{corollary} Suppose that $L_C$ is surjective.
%\begin{enumerate}
    %\item  \item 
    If %$V^\theta \git H'$ is $S_2$ and 
    the union of the components of the $\theta$-unstable locus of $V\git H'$ which have codimension one  is the vanishing locus of a global function,
    %Cartier divisor (e.g., $V$ is normal and $V^\theta$ has complement of codimension at least two), 
    then $\Cox(X_\theta/Y)$ is finitely generated.
%for $H'$ as above.
 %   \end{enumerate}
\end{corollary}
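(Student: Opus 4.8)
The plan is to realise $\Gamma(\mathcal{O}_{V^\theta})^{H'}$ as a finite-index graded subring of $\Cox(X_\theta/Y)$, show that this subring is finitely generated, and then bootstrap to the whole Cox ring by an integrality argument. First I would record the combinatorial consequence of the hypothesis: since $L_C$ is surjective and $\Lambda\leq G^\vee$ has full rank, the image $\Pi=L_C(\Lambda)$ spans $\Pic(X_\theta/Y)_{\QQ}$, so $\Pi$ is a finite-index subgroup of $M:=\Pic(X_\theta/Y)/\text{torsion}\cong\ZZ^\rho$; write $d:=[M:\Pi]$. By the isomorphism \eqref{e:h0-iso-general}, the subring $R$ of $\Cox(X_\theta/Y)$ consisting of the graded pieces indexed by $\Pi$ is isomorphic to $\Gamma(\mathcal{O}_{V^\theta})^{H'}$.

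Next I would prove that $R$ is finitely generated by applying Proposition~\ref{prop:sect2} with the reductive group $H'$ in place of $H=[G,G]$. The proof of that proposition goes through verbatim for $H'$: each $f\in\kk[V]_{m\theta}$ remains $H'$-semi-invariant, so $D(f)$ is still $H'$-stable, the intersection formula \eqref{e:cox-fla} holds with $H'$, and the normalisation argument is unchanged. The hypothesis that the codimension-one part of the $\theta$-unstable locus of $V\git H'$ is the vanishing locus of a global function supplies exactly the $H'$-invariant function $f$ needed in the finite-generation half of that proof, so $R\cong\Gamma(\mathcal{O}_{V^\theta})^{H'}$ is a finitely generated $\kk$-algebra, hence Noetherian.

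The crux is the passage from $R$ to $S:=\Cox(X_\theta/Y)$. Because $X_\theta$ is a variety, $S$ is an integral domain, $M$-graded with $M\cong\ZZ^\rho$, and $R$ is its $\Pi$-graded subring. For homogeneous $x\in S_m$ one has $dm\in\Pi$, so $x^{d}\in S_{dm}\subseteq R$; thus $S$ is integral over $R$. Moreover $\operatorname{Frac}(S)$ is a \emph{finite} extension of $\operatorname{Frac}(R)$: the degree-zero components of the two fraction fields coincide, and the subgroup of $M$ generated by the support of $S$ contains that of $R$ with finite (indeed $d$-torsion) quotient. Since $R$ is a finitely generated $\kk$-algebra it is a Nagata domain, so its integral closure $\bar R$ in the finite extension $\operatorname{Frac}(S)$ is a finite $R$-module; as $S\subseteq\bar R$ and $R$ is Noetherian, $S$ is a finite $R$-module and therefore a finitely generated $\kk$-algebra, as required.

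I expect this third step to be the main obstacle, precisely because finite generation of a finite-index Veronese-type subring does \emph{not} formally imply finite generation of the full graded ring (this fails for general graded rings, e.g.\ when a non-principal graded piece is infinite-dimensional with vanishing self-products). The point to exploit is that $\Cox(X_\theta/Y)$ is an integral domain with $\operatorname{Frac}(S)/\operatorname{Frac}(R)$ finite, which is what allows one to replace naive module-finiteness by the finiteness of the integral closure of a Nagata domain in a finite field extension.
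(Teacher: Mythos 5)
Your proposal is correct and follows the same skeleton as the paper's proof: identify the $\Pi$-graded subring of $\Cox(X_\theta/Y)$ with $\Gamma(\mathcal{O}_{V^\theta})^{H'}$ via \eqref{e:h0-iso-general}, obtain its finite generation from Proposition~\ref{prop:sect2} applied with the reductive group $H'$ in place of $H$ (the paper leaves this substitution implicit, so your remark that the proposition's proof carries over, with the hypothesis supplying the needed invariant function on $V\git H'$, is if anything more explicit; note that both treatments tacitly use that the $\theta$-semi-invariants are $H'$-invariant, i.e.\ that the complement $\Lambda'$ is chosen compatibly with $\theta$), and then transfer finite generation from this finite-index Veronese subring to the full Cox ring using that $\Cox(X_\theta/Y)$ is a domain. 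The genuine difference lies in that last transfer step: the paper simply cites \cite[Corollary~1.1.2.6]{ADHL}, which asserts that an integral graded algebra is finitely generated if and only if its Veronese subalgebra over a finite-index subgroup of the grading group is, whereas you reprove the needed direction by hand --- every homogeneous $x\in S_m$ satisfies $x^{d}\in R$ because $dm\in\Pi$, so $S$ is integral over $R$; $\operatorname{Frac}(S)$ is finite over $\operatorname{Frac}(R)$; and $R$, a finitely generated domain over $\kk$, is Nagata, so $S$ lies in the module-finite integral closure of $R$ in $\operatorname{Frac}(S)$ and is itself module-finite over $R$. This is sound; the one step you state too briefly is the finiteness of $\operatorname{Frac}(S)$ over $\operatorname{Frac}(R)$, which is completed by the standard homogeneous-fraction trick: for homogeneous $a,b\in S$ with $\deg a-\deg b\in\Pi$ one has $a/b=ab^{d-1}/b^{d}$ with $ab^{d-1},\,b^{d}\in R$, so every homogeneous fraction of degree in $\Pi$ lies in $\operatorname{Frac}(R)$, and one nonzero homogeneous element per coset of $\Pi$ in the support of $S$ then spans $\operatorname{Frac}(S)$ over $\operatorname{Frac}(R)$. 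What each route buys: the paper's citation is shorter, while your argument is self-contained and makes visible that the only inputs are integrality of the Cox ring (exactly what the paper's first paragraph establishes, and which your final remark correctly identifies as the obstruction for general graded rings) and the Nagata property of $R$.
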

\begin{proof}
 Since $V$ is assumed to be an affine variety, it is reduced and irreducible. Hence so too is $X_{\theta}$. This implies that  $\Cox(X_\theta/Y)$ has no zero divisors since, if it did, there would be a homogeneous zero divisor, and the global sections of every line bundle embed inside the ring of rational functions on $X_\theta$, compatibly with the multiplication. 
    %$\Gamma( \mathcal{O}_{X_\theta})$-module. 
    % The Cox ring of X has no zero divisors, because the fraction field of X has no zero-divisors. It now follows from the general statement in [ADHL, Coro 1.1.2.6] that Cox(X) is finitely generated iff a subalgebra graded by a subgroup of finite index in the grading group is finitely generated. 
    The last inclusion in \eqref{e:h0-iso-general} is the subalgebra concentrated in degrees $\Pi \leq \Pic(X_\theta/Y)/\text{torsion}$,  which is a subgroup of finite index if and only if $L_C$ is surjective. As a result, \cite[Corollary 1.1.2.6]{ADHL} ensures that, when $L_C$ is surjective, $\Cox(X_\theta/Y)$ is a finitely-generated algebra over $\kk$ if and only if $\Gamma(\mathcal{O}_{V^\theta})^{H'}$ is. The result then follows from Proposition~\ref{prop:sect2}.
    %over the subalgebra graded by $\Pi$. This inclusion is an equality if and only if $\Pi=\Pic(X_\theta/Y)/\text{torsion}$. 
\end{proof}

In particular, when all the assumptions of the corollary hold, we can use the action of $G/H'$ on the affinisation of $V^\theta \git H'$ to obtain all small birational models of $X_\theta$.

\begin{remark}
  We have assumed throughout that $V$ is reduced and irreducible. This can be relaxed as follows (cf.~Example \ref{exa:threefold} and Section \ref{s:hilbert} below).  Take  a GIT quotient $X_\theta = V\git_\theta G$ with $\theta$ generic and let $X_\theta^0$ be the reduced subscheme of an irreducible component  of this quotient. If $G$ is connected, then 
  %Let us suppose further that 
  the preimage of $X_\theta^0$ in $V$ is irreducible.
  %(this is guaranteed i). 
  Let $V^0 \subseteq V$ be the reduced subscheme of this irreducible component. Then $V^0\git_\theta G \cong X_\theta^0$.
  %Alternatively, if the preimage of $X_\theta^0$ is disconnected, we could take $V^0$ to be the reduced subscheme of an irreducible component and replace $G$ by the stabiliser $G^0$ of $V^0$. If $X_\theta^0$ is normal, then it follows from Zariski's main theorem that the map    $V^0\git_\theta G^0 \to X_\theta^0$ is an isomorphism (since it is bijective).     
  In particular we can use the theory developed above to study the reduced subscheme of every irreducible component of the quotient $V\git_\theta G$.
  %(at least if $G$ is connected or the component is normal).}
\end{remark}

 \section{Classes of examples}
 \label{sec:examples}
 We now apply Theorem~\ref{thm:Coxringisom} to describe the Cox ring as a semi-invariant ring for several large classes of examples. A common theme here is that all of these examples are (relative) Mori Dream Spaces, or equivalently, their Cox rings are finitely generated $\kk$-algebras. 

 \begin{comment}
 \ali{First, it is convenient to recall the construction of moduli spaces of quiver representations. Let $Q$ be a finite quiver with vertex set $Q_0$ and arrow set $Q_1$. Write $\head,\tail\colon Q_1\to Q_0$ for the maps that assign to each arrow $a$ in $Q$, the vertex at the head and tail of $a$.  For any dimension vector $\alpha=(\alpha_i)\in \NN^{Q_0}$, the group $\GL_\alpha:=\prod_{i\in Q_0} \GL(\alpha_i)$ acts naturally on the vector space 
 \[
 \Rep(Q,\alpha)=\bigoplus_{a\in Q_1} \Hom\big(\kk^{\alpha_{\tail(a)}}, \kk^{\alpha_{\head(a)}}\big)
 \]
 of representations of $Q$ of dimension $\alpha$. In this context, the commutator subgroup $H$ of $\GL_\alpha$ is traditionally denoted $\SL_\alpha:=\prod_{i\in Q_0} \SL(\alpha_i)$. The diagonal scalar subgroup $\kk^\times\subset \GL_\alpha$ acts trivially, leaving a faithful action of $G:=\GL_\alpha/\kk^\times$. For $\theta=(\theta_i)\in G^\vee$, define the 
 %for the character satisfying $\theta(g) = \prod_{i\in Q_0} \det(g_i)^{\theta_i}$ for $g=(g_i)\in G$. Following King~\cite{King94}, the 
 GIT quotient 
  \[ \mathcal{M}_\theta(Q,\alpha):=\Rep(Q,\alpha)\git_\theta G.
 \]
 If $\alpha$ is indivisible and $\theta$ is generic, then  King~\cite{King94} showed that $\mathcal{M}_\theta(Q,\alpha)$ is the fine moduli space of $\theta$-stable representations of $Q$ that have dimension $\alpha$.}
 \end{comment}
 
\begin{example}[\textbf{Quiver flag varieties}]
  Let $Q$ be a finite, acyclic quiver with a unique source vertex denoted $0\in Q_0$, and let $\alpha=(\alpha_i)\in \NN^{Q_0}$ be a dimension vector satisfying $\alpha_0=1$. The group $\GL_\alpha:=\prod_{i\in Q_0} \GL(\alpha_i)$ acts by change of basis on the vector space $\Rep(Q,\alpha)$ of representations of $Q$ of dimension $\alpha$. In this context, the commutator subgroup $H$ of $\GL_\alpha$ is traditionally denoted $\SL_\alpha:=\prod_{i\in Q_0} \SL(\alpha_i)$. If we write $G:=\GL_\alpha/\GGm$ for the quotient by the diagonal scalar subgroup, then the unique character $\vartheta=(\vartheta_i)\in G^\vee$ satisfying $\vartheta_i=1$ for $i\neq 0$ is generic. The fine moduli space $\mathcal{M}_\vartheta(Q,\alpha):=\Rep(Q,\alpha)\git_\vartheta G$ of quiver representations is a smooth, projective variety called a \emph{quiver flag variety}~\cite{Craw11}. Let $C$ denote the GIT chamber containing $\vartheta$. The integral version $\ell_C$ of the linearisation map is an isomorphism and the $\vartheta$-unstable locus has codimension at least two by \cite[(3.1), Lemma 3.7 and the proof of Proposition~3.1]{Craw11}. It follows from Corollary~\ref{cor:Cox} that 
 \begin{equation}
     \label{eqn:Coxquiverflag}
\text{Cox}\big(\mathcal{M}_\vartheta(Q,\alpha)\big) \cong \kk[\Rep(Q,\alpha)]^{\SL_\alpha}
 \end{equation}
  as a $\ZZ^{\vert Q_0\vert -1}$-graded ring. Since $\SL_\alpha$ is reductive, the Cox ring is a finitely generated $\kk$-algebra, so $\mathcal{M}_\theta(Q,\alpha)$ is a Mori Dream Space by Hu--Keel~\cite{HuKeel00} (this was first proved in \cite[Proposition~3.1]{Craw11}). It follows that the isomorphism \eqref{eqn:Coxquiverflag} holds for any generic $\theta\in G^\vee$ such that the line bundle $\ell_C(\theta)$ lies in the movable cone of $\mathcal{M}_\vartheta(Q,\alpha)$.
\end{example}

  \begin{remarks}
  \begin{enumerate}
      \item The graded ring \eqref{eqn:Coxquiverflag} is the \emph{semi-invariant ring} of the quiver $Q$ and choice of vector $\alpha$. This ring has been much studied in the literature; see, for example, Schofield~\cite{Schofield91}, Derksen--Weyman~\cite{DW00,DW17}, Domokos--Zubkov~\cite{DZ01} and Schofield--Van den Bergh~\cite{SVdB01}. 
   \item More generally, for any acyclic quiver $Q$, dimension vector $\alpha$ and generic character $\vartheta\in G^\vee$, the isomorphism \eqref{eqn:Coxquiverflag} describes the Cox ring of $\mathcal{M}_\vartheta(Q,\alpha)$ if the $\vartheta$-unstable locus is of codimension at least two in $\Rep(Q,\alpha)$. Indeed, 
   %that is, $\alpha$ is \emph{$\theta$-amply stable} in the sense of Reineke et al.~\cite{FRS21}. Indeed, this condition implies that 
   the integral version of the linearisation map is an isomorphism (see \cite[Proposition~3.1]{FRS21}) and the claim now follows from Corollary~\ref{cor:Cox}.
  \end{enumerate}
  \end{remarks}
 
  \begin{example}[\textbf{Nakajima quiver varieties}]
  \label{exa:Nakajimaquivervars}
  Consider a finite graph with vertices $0,1,\dots, r$, and dimension vectors $\mathbf{v}, \mathbf{w}\in \mathbb{N}^{r+1}$ with $\mathbf{w}\neq 0$. Add a framing vertex $\infty$, and $w_i$ edges joining $\infty$ to vertex $i$. Let $Q$ denote the doubled quiver of this new graph with vertex set $Q_0=\{\infty, 0, 1, \dots, r\}$, and define $\alpha:=(1,\mathbf{v})\in \mathbb{N}^{r+2}$. 
 % Let $Q$ denote the framed, doubled quiver of the graph, so $Q_0=\{\infty, 0, 1, \dots, r\}$ where $\infty$ is a framing vertex. 
 The reductive group $G:=\prod_{0\leq k\leq r} \GL(v_k)$ acts by change of basis on the space $\Rep(Q,\alpha)$ of representations of $Q$ of dimension vector $\alpha$. Write $\SL:=\prod_{0\leq k\leq r} \SL(v_k)$ for the commutator subgroup.  Since $Q$ is a doubled quiver, the action of $G$ is Hamiltonian for the natural symplectic form on $\Rep(Q,\alpha)$, giving rise to a moment map $\mu\colon \Rep(Q,\alpha)\to \mathfrak{g}^*$. Following Nakajima~\cite{Nak1994, NakDuke98}, after replacing any $\theta\in G^\vee_\QQ$ %$\otimes_{\ZZ} \QQ$ 
  by a positive multiple if necessary, the  \emph{Nakajima quiver variety} is defined to be
  \begin{equation}
      \label{eqn:NakajimaGIT}
\mathfrak{M}_\theta(\mathbf{v},\mathbf{w}):= \mu^{-1}(0)\git_\theta G,
   \end{equation}
  taken with the underlying reduced scheme structure (see \cite[Section~4.1]{BCS23} for notation and details). Bellamy--Schedler~\cite{BS21} show that $\mathfrak{M}_\theta(\mathbf{v},\mathbf{w})$ is a normal variety. Assume that a simple representation of the preprojective algebra $\Pi(Q)$ of dimension $\alpha$ exists; following Crawley-Boevey~\cite{CBmomap}, we write this condition as $\alpha\in \Sigma_0$. Then \cite[Theorem~1.2]{CBmomap} implies that $\mu^{-1}(0)$ is reduced and irreducible, i.e.\ it is a variety. As shown in \cite[Proposition~4.2]{BCS23}, work of McGerty--Nevins~\cite{QuiverKirwan} implies that for any generic $\theta\in G^\vee$ in a GIT chamber $C$, the integral version $\ell_C$ of the linearisation map is an isomorphism. Then
 \[
\text{Cox}\big(\mathfrak{M}_\theta(\mathbf{v},\mathbf{w})/\mathfrak{M}_0(\mathbf{v},\mathbf{w})\big) \cong 
\Gamma\big(\mathcal{O}_{\mu^{-1}(0)^\theta}\big)^{\SL}
 \]
  as a $\ZZ^{\vert Q_0\vert - 1}$-graded ring by Theorem~\ref{thm:Coxringisom}. Namikawa~\cite{NamikawaPoissondeformations} (see also \cite[Corollary~4.8]{BCS23}) shows that $\mathfrak{M}_\theta(\mathbf{v},\mathbf{w})$ is a Mori Dream Space over $\mathfrak{M}_0(\mathbf{v},\mathbf{w})$, so this Cox ring is a finitely generated $\kk$-algebra. 
\end{example}

%\begin{remark}
%  \ali{Hubbard~\cite{Hubbard24} uses Example~\ref{exa:Nakajimaquivervars} to calculate explicitly the Cox ring of any hyperpolygon space which, by construction, is a Nakajima quiver variety for a star-shaped quiver.}  
%\end{remark}

  \begin{example}[\textbf{Hypertoric varieties}]
  \label{exa:hypertoric}
 Here we work over the field $\mathbb{C}$ of complex numbers. For $n, r\in \mathbb{N}$, consider a short exact sequence of finitely generated and free abelian groups
 \[
 0\longrightarrow \ZZ^{n-r}\stackrel{B}{\longrightarrow} \ZZ^n\stackrel{A}{\longrightarrow}\ZZ^r
 \longrightarrow 0,
 \]
 where no row of the matrix $B$ is zero, and where $A$ is a unimodular matrix. Consider the action of $G:= (\mathbb{C}^\times)^r$ on the complex symplectic vector space $T^*\mathbb{C}^n = \mathbb{C}^n\times (\mathbb{C}^n)^*$, where the matrix $(A,-A)$ records the weights of the action. This action is Hamiltonian for the natural symplectic form on $T^*\mathbb{C}^n$, giving a moment map $\mu\colon T^*\mathbb{C}^n\to \mathfrak{g}^*$. For any charcter $\theta\in G^\vee$, the corresponding \emph{hypertoric (or toric hyperk\"ahler) variety} is
 \[
 X_\theta:= \mu^{-1}(0)\git_\theta G. 
 \]
 For generic $\theta$ in a GIT chamber $C$, the variety $X_\theta$ is non-singular \cite[Proposition~6.2]{HS02}, while the proof of \cite[Theorem~5.1]{BCS23} shows that the integral version $\ell_C$ of the linearisation map is an isomorphism of finitely generated and free abelian groups. Since $G\cong \Gab$, Theorem~\ref{thm:Coxringisom} shows that
 \[
 \Cox(X_\theta/Y)\cong \Gamma\big(\mathcal{O}_{\mu^{-1}(0)^\theta}\big)%CC[\mu^{-1}(0)^\theta\big]
 \]
 as $G^\vee$-graded algebras for $Y=X_0$. Again, note that $\Cox(X_\theta/Y)$ is a finitely generated $\CC$-algebra because $X_\theta$ is a Mori Dream Space over $Y$ \cite[Corollary~3.16, Theorem~5.1]{BCS23}.
 \end{example}
 
  \begin{example}[\textbf{Moduli spaces of $\Gamma$-constellations}]
  \label{exa:threefold}
  Let $\Gamma\subset \SL(3,\kk)$ be a finite subgroup, let $\text{Irr}(\Gamma)$ denote the set of isomorphism classes of irreducible representations of $\Gamma$, and write $\rho_0$ for the trivial representation of $\Gamma$. A \emph{$\Gamma$-constellation} is a $\Gamma$-equivariant coherent sheaf $F$ on $\mathbb{A}^3$ whose space of sections $\Gamma(F)$ is isomorphic as a $\kk[\Gamma]$-module to the regular representation $R$ of $\Gamma$. If $W$ is the three-dimensional representation of $\Gamma$ given by the inclusion $\Gamma\subset \SL(3,\kk)$, then the affine scheme $Z :=
  \big\{B\in \Hom_{\kk[\Gamma]}(R,W\otimes R) \mid B\wedge B = 0\big\}$ parametrises $\Gamma$-constellations, while isomorphism classes of $\Gamma$-constellations are precisely the orbits under the natural action of the group $\text{Aut}_\Gamma(R)\cong \prod_{\rho\in \text{Irr}(\Gamma)} \GL(\dim \rho)$ \cite[Section~5.3]{Craw01}. The quotient  $G:=\text{Aut}_\Gamma(R)/\GGm\cong \prod_{\rho\neq \rho_0} \GL(\dim\rho)$ by the diagonal scalar subgroup acts faithfully, and we write $\SL:= \prod_{\rho\neq \rho_0} \SL(\dim\rho)$ for the commutator subgroup. 
  
  For any generic $\theta\in G^\vee$, the GIT quotient $\mathcal{M}_\theta:= Z \git_\theta G$ is the fine moduli space of $\theta$-stable $\Gamma$-constellations. Moreover, by \cite[Theorem~2.5]{CrawIshii04}, $\mathcal{M}_\theta$ is smooth and irreducible, so there exists a unique irreducible component $Z_0$ of $Z$ such that $\mathcal{M}_\theta = V \git_\theta G$ with $V = (Z_0)_{\mathrm{red}}$. A result of Craw--Ishii~\cite[Section~2]{CrawIshii04} shows that the morphism
   \[
   \tau_\theta\colon \mathcal{M}_\theta\longrightarrow \mathbb{A}^3/\Gamma
   \]
    sending a $\Gamma$-constellation to its supporting $\Gamma$-orbit is a projective, crepant resolution of singularities. If every nontrivial conjugacy class of $\Gamma$ is junior in the sense of Ito--Reid~\cite{ItoReid96}, then the proof of \cite[Lemma~6.2]{BCS23} shows that the map $\ell_C$ 
    %of finitely generated and free abelian groups 
    associated to the GIT chamber $C$ containing $\theta$ is an isomorphism.  The group $\Pic(\mathbb{A}^3/\Gamma)$ is trivial,
    %by Benson~\cite{Benson93},  
    so Theorem~\ref{thm:Coxringisom} implies that
 \[
\text{Cox}(\mathcal{M}_\theta) \cong \Gamma(\mathcal{O}_{V^\theta})^{\SL}
 \]
  as a $\Gab^\vee$-graded ring. 
  %In this context, 
  Grab~\cite{Grab19} showed that $\Cox(\mathcal{M}_\theta)$ is a finitely generated $\kk$-algebra, so $\mathcal{M}_\theta$ is a Mori Dream Space over $\mathbb{A}^3/\Gamma$; an independent proof appears in \cite[Corollary~3.16 and Theorem~6.9]{BCS23}. 
  \end{example}

 \section{The Cox ring of the Hilbert scheme of points in the plane}\label{s:hilbert}
 We conclude by computing explicitly the Cox ring of the Hilbert scheme of $n$-points in the affine plane for $n\geq 1$. 
 
  Consider the graph with one vertex and one loop, and choose dimension vectors $\mathbf{v}=n$ and $\mathbf{w}=1$, as in Example~\ref{exa:Nakajimaquivervars}. The commutator of $G = \GL(n)$ is $H=\SL(n)$, and  we identify the character group of the quotient torus $\Gab\cong \GGm$ with $\ZZ$ in such a way that the isomorphism $\iota\colon \ZZ\to G^\vee$ from Section~\ref{sec:semi-invariant} sends $\theta$ to the character $\chi_\theta$ satisfying $\chi_\theta(g)=\det(g)^\theta$ for $g\in \GL(n)$. With this sign convention, it is well known \cite{Craw23,  NakajimaBook} that the Nakajima quiver variety $\mathfrak{M}_\theta(\mathbf{v},\mathbf{w})=\mu^{-1}(0)\git_\theta G$ for $\theta>0$ coincides with $\text{Hilb}^{[n]}(\mathbb{A}^2)$, both as a variety and as a fine moduli space, and moreover, the $G$-invariant subalgebra $\kk[\mu^{-1}(0)]^G$ is isomorphic to the coordinate ring of the $n^{th}$ symmetric product $\text{Sym}^n(\mathbb{A}^2)$.  (Note that Ginzburg~\cite{GinzburgQuiverVarieties} uses the opposite sign convention in defining $\iota$.)
 
 Gan--Ginzburg~\cite[Theorem~3.3.3]{GanGinzburg} show that the affine scheme $\mu^{-1}(0)$ %for the GIT construction from \eqref{eqn:NakajimaGIT}
 is reduced, equidimensional and has irreducible components $\mathscr{M}_0, \dots, \mathscr{M}_r$, while \cite[(2.8.1)]{GanGinzburg} constructs a closed immersion $\boldsymbol{\varepsilon} \colon \mathbb{A}^{2n} \hookrightarrow \mu^{-1}(0)$ that factors via $\mathscr{M}_0$. In addition, in the appendix to \emph{loc.~cit.}, Ginzburg studies polynomial functions on $\mu^{-1}(0)$. To state the result, consider the action of the symmetric group $S_n$ on $\kk[\mathbb{A}^{2n}]$
 %= \kk[x_1, \dots, x_n, y_1, \dots, y_n]$ 
 that permutes simultaneously the first $n$ variables and the last $n$ variables.
 %We define $A^0 = P^{S_n}$, $A^1 = P^{sign}$ and $A^k = A^1 \cdots A^1$ ($k$-times).establishes the following:}

\begin{lemma}[Ginzburg]
\label{lem:Ginzburg}  
For $\theta>0$, pullback along $\boldsymbol{\varepsilon}$ defines an isomorphism from 
%the space 
$\kk[\mu^{-1}(0)]_\theta$ 
%is isomorphic, via $\boldsymbol{\varepsilon}^*$, 
to the subspace in $\kk[\mathbb{A}^{2n}]$ spanned by products $\prod_{1\leq i\leq \theta} p_i$, where each $p_i$ is sign-semi-invariant for the action of $S_n$. Moreover:
\begin{enumerate}
\item[\one] the inclusion $\mathscr{M}_0\hookrightarrow \mu^{-1}(0)$ induces an isomorphism $\kk[\mu^{-1}(0)]_\theta \cong \kk[\mathscr{M}_0]_\theta$;
\item[\two] the $\theta$-stable locus of $\mu^{-1}(0)$ is contained in $\mathscr{M}_0$, so $ \Hilb^{[n]}(\mathbb{A}^2)\cong \mathscr{M}_0\git_\theta \GL(n)$; and
 \item[\three] if $f\in \kk[\mu^{-1}(0)]_\theta$ is non-zero, then $\kk[\mu^{-1}(0)][f^{-1}] = \kk[\mathscr{M}_0][f^{-1}]$. 
 \end{enumerate}
\end{lemma}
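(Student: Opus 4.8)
The plan is to treat the principal assertion---that pullback along $\boldsymbol{\varepsilon}$ identifies $\kk[\mu^{-1}(0)]_\theta$ with the span of $\theta$-fold products of sign-semi-invariants---as the input supplied by Ginzburg's appendix to \cite{GanGinzburg}, and to deduce \one--\three from it together with the geometry recorded above: that $\mu^{-1}(0)$ is reduced and equidimensional with irreducible components $\mathscr{M}_0,\dots,\mathscr{M}_r$, and that $\boldsymbol{\varepsilon}$ is a closed immersion factoring through $\mathscr{M}_0$. The single geometric fact on which everything hinges is the containment
\[
(\star)\qquad V^\theta:=\mu^{-1}(0)^\theta\subseteq \mathscr{M}_0,
\]
so I would establish this first.

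To prove $(\star)$ I would use the locus of $n$ distinct points. Over the dense open subscheme of $\Hilb^{[n]}(\mathbb{A}^2)$ parametrising $n$ distinct points, the tautological representation is conjugate to one of the form $(\mathrm{diag}(x),\mathrm{diag}(y),i_0,0)$ with $i_0$ a cyclic vector, i.e.\ to a point of $\boldsymbol{\varepsilon}(\mathbb{A}^{2n})$; such points are $\theta$-stable. Since $V^\theta\to \Hilb^{[n]}(\mathbb{A}^2)$ is a geometric quotient for the free action of the connected reductive group $G=\GL(n)$, and $\Hilb^{[n]}(\mathbb{A}^2)$ is smooth and irreducible, the variety $V^\theta$ is irreducible, and the preimage of the distinct-points locus---a dense open subset of $V^\theta$---lies in the saturation $G\cdot \boldsymbol{\varepsilon}(\mathbb{A}^{2n})$. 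As $\boldsymbol{\varepsilon}$ factors through the $G$-stable closed subvariety $\mathscr{M}_0$, this dense open subset lies in $\mathscr{M}_0$; taking closures yields $(\star)$.

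With $(\star)$ in hand the three items become formal. For \one, the restriction $\kk[\mu^{-1}(0)]_\theta\to \kk[\mathscr{M}_0]_\theta$ is surjective because $\mathscr{M}_0\hookrightarrow \mu^{-1}(0)$ is a closed immersion of affine $G$-varieties and, $G$ being reductive, passage to the $\det^\theta$-semi-invariant subspace is exact; it is injective because $\boldsymbol{\varepsilon}$ factors through $\mathscr{M}_0$ while $\boldsymbol{\varepsilon}^*$ is injective on $\kk[\mu^{-1}(0)]_\theta$ by the principal assertion (alternatively, a weight-$\theta$ function vanishing on $\mathscr{M}_0$ vanishes off $V^\theta$, hence generically on each $\mathscr{M}_i$ with $i\geq 1$ by $(\star)$, hence everywhere). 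The same argument applies verbatim with $\theta$ replaced by any positive multiple $m\theta$. For \two, the containment $(\star)$ is already proved; to identify the quotient I would check that the $\theta$-semistable locus of $\mathscr{M}_0$ equals $V^\theta$, one inclusion being restriction of semi-invariants and the other following by lifting a nonvanishing semi-invariant on $\mathscr{M}_0$ to one on $\mu^{-1}(0)$ via the surjectivity above, whence $\mathscr{M}_0\git_\theta \GL(n)=V^\theta\git \GL(n)=\Hilb^{[n]}(\mathbb{A}^2)$. For \three, any nonzero $f\in\kk[\mu^{-1}(0)]_\theta$ with $\theta>0$ satisfies $\{f\neq 0\}\subseteq V^\theta\subseteq\mathscr{M}_0$ by definition of semistability and $(\star)$; hence every element of the ideal of $\mathscr{M}_0$ vanishes on the principal open $\{f\neq 0\}$ and so dies in its coordinate ring $\kk[\mu^{-1}(0)][f^{-1}]$, giving $\kk[\mu^{-1}(0)][f^{-1}]=\kk[\mathscr{M}_0][f^{-1}]$.

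The main obstacle is really $(\star)$: once one knows the semistable locus is swept out by the distinguished component $\mathscr{M}_0$, all of \one--\three reduce to reductive exactness, the cited injectivity of $\boldsymbol{\varepsilon}^*$, and elementary localisation. The point requiring genuine care is the identification of the distinct-points locus with $G$-orbits inside $\boldsymbol{\varepsilon}(\mathbb{A}^{2n})$ and the resulting density in $V^\theta$; granting this, $(\star)$ and hence the whole lemma follow. The deep ingredient---the description of $\kk[\mu^{-1}(0)]_\theta$ through products of sign-semi-invariants---is precisely what we import from Ginzburg, so no new hard analysis is needed in this argument.
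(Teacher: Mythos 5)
Your proposal cannot be compared line-by-line with the paper's argument, because the paper gives none: Lemma~\ref{lem:Ginzburg} is imported wholesale from Ginzburg's appendix to \cite{GanGinzburg}, with the supporting geometry (reducedness and equidimensionality of $\mu^{-1}(0)$, its components $\mathscr{M}_0,\dots,\mathscr{M}_r$, and the closed immersion $\boldsymbol{\varepsilon}$ factoring through $\mathscr{M}_0$) quoted from Theorem~3.3.3 and (2.8.1) of \emph{loc.~cit.} Judged on its own terms, your reconstruction of \one--\three{} from the principal assertion is correct, and your pivot $(\star)$, namely $\mu^{-1}(0)^\theta\subseteq\mathscr{M}_0$, is exactly the right mechanism: once it holds, surjectivity in \one{} is reductive exactness, \three{} is localisation on a reduced scheme once one notes $D(f)\subseteq\mu^{-1}(0)^\theta\subseteq\mathscr{M}_0$, and \two{} follows by matching semistable loci through lifting semi-invariants. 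Of your two injectivity arguments for \one, prefer the second (a weight-$m\theta$ semi-invariant vanishes on the unstable locus, hence on the dense subset $\mathscr{M}_i\setminus\mathscr{M}_0$ of each component $\mathscr{M}_i$ with $i\geq 1$, hence identically by reducedness): it works uniformly in the weight and, unlike the first, does not lean on the principal assertion, which you are in any case importing as a black box.

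Two steps in your proof of $(\star)$ are genuinely implicit and should be made explicit, though both are standard. First, you place the $\theta$-stable points lying over the distinct-points locus in the $G$-orbit of $\boldsymbol{\varepsilon}(\mathbb{A}^{2n})$, with vanishing fourth coordinate; this needs the standard moment-map lemma that on $\mu^{-1}(0)$ cyclicity of $i$ forces $j=0$ (the trace argument showing $j$ annihilates $\kk\langle X,Y\rangle\, i$). Second, after simultaneous diagonalisation with distinct joint eigenvalues, cyclicity of $i$ means all of its coordinates are nonzero, and only then does conjugation by the diagonal torus move $i$ to the specific vector used in (2.8.1). You also use silently that semistability and stability coincide for every $\theta\neq 0$ in this example (points with cyclic $i$ have trivial stabiliser), which is fair since the paper records the identification $\mu^{-1}(0)\git_\theta\GL(n)\cong\Hilb^{[n]}(\mathbb{A}^2)$ immediately before the lemma. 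With these glosses added, your derivation of \one--\three{} is complete; the one thing it does not, and cannot, reprove is the deep part --- the identification of $\kk[\mu^{-1}(0)]_\theta$ with the span of products of sign-semi-invariants --- which is precisely the content supplied by Ginzburg.
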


 \begin{remark}
Since the scheme $\mu^{-1}(0)$ is equidimensional, the $\theta$-unstable locus in $\mu^{-1}(0)$ is of codimension zero. 
\end{remark}     
     
Let $\GL(n)$ act on $\kk[\GGm] = \kk[t^{\pm 1}]$ as $g \cdot t = \det(g) t$. Pullback along $\boldsymbol{\varepsilon}$ induces an embedding of graded rings $\kk[\mathscr{M}_0]^{\SL} \cong \kk[\mathscr{M}_0 \times \GGm]^{\GL} \hookrightarrow \kk[\mathbb{A}^{2n} \times \GGm]$. The action of $S_n$ on $\kk[\mathbb{A}^{2n}]$ extends to an action on $\kk[\mathbb{A}^{2n} \times \GGm]:=\kk[\mathbb{A}^{2n}]\otimes_\kk \kk[t^{\pm 1}]$ by setting $\sigma(t) := \sign(\sigma) t$ for $\sigma \in S_n$. Ginzburg shows that for $\theta < 0$, any $\theta$-semi-invariant function on $\mu^{-1}(0)$ vanishes when restricted to $\mathscr{M}_0$, so Lemma~\ref{lem:Ginzburg} implies that the semi-invariant ring $\kk[\mathscr{M}_0]^{\SL}$ is isomorphic to the subring
$\kk[\mathbb{A}^{2n}]^{S_n} \oplus \bigoplus_{m>0} \big(\kk[\mathbb{A}^{2n}]_{\sign} t \big)^m$ of  $\kk[\mathbb{A}^{2n} \times \GGm ]$. It is convenient to  set $A^0:= \kk[\mathbb{A}^{2n}]^{S_n}$, $A^1 = \kk[\mathbb{A}^{2n}]_{\sign}$ and $A^m:= A^1 \cdots A^1$ ($m$-times) for $m>1$, so that 
 \begin{equation}
\label{eq:COxHilb}
\kk[\mathscr{M}_0]^{\SL}\cong \bigoplus_{m\geq 0} A^mt^m,
 \end{equation}
 where we adopt the convention that $(A^1)^0=A^0$.

Combining Ginzburg's Lemma~\ref{lem:Ginzburg} with our earlier results leads to the following result:

\begin{theorem}
\label{thm:CoxHilbert}
The Cox ring of $\Hilb^{[n]}(\mathbb{A}^2)$ is the $\mathbb{Z}$-graded ring
\begin{equation}\label{eq:coxHilb}
(\kk[\mathbb{A}^{2n}][t^{-1}]\big)^{S_n} \oplus \bigoplus_{m > 0} \big(\kk[\mathbb{A}^{2n}]_{\sign} t\big)^m,
\end{equation}
where $\kk[\mathbb{A}^{2n}]$ is in degree zero and $\deg t = 1$.
\end{theorem}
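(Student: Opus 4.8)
The plan is to feed the reductions already in place into the localisation formula of Proposition~\ref{prop:sect2} and then compute the resulting intersection explicitly. By Lemma~\ref{lem:Ginzburg}\two, the $\theta$-stable locus of $\mu^{-1}(0)$ sits inside the component $\mathscr{M}_0$, so that $\Hilb^{[n]}(\mathbb{A}^2)\cong \mathscr{M}_0\git_\theta \GL(n)$; applying Theorem~\ref{thm:Coxringisom} to the variety $V=\mathscr{M}_0$ (via the Remark on irreducible components) identifies the Cox ring with $\Gamma(\mathcal{O}_{\mathscr{M}_0^\theta})^{\SL}$. Proposition~\ref{prop:sect2} then rewrites this as $\bigcap_f \kk[\mathscr{M}_0]^{\SL}[f^{-1}]$, where $f$ runs over the nonzero homogeneous elements of positive degree in $\kk[\mathscr{M}_0]^{\SL}\cong B:=\bigoplus_{m\geq 0}A^m t^m$ from \eqref{eq:COxHilb}. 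Writing $f=at^m$ with $a\in A^m$ and $m\geq 1$, the whole computation takes place inside $\kk(\mathbb{A}^{2n})(t)$, and the goal is to show that this intersection equals the ring \eqref{eq:coxHilb}.

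Throughout I would work with the $\ZZ$-grading: both $B$ and the target ring \eqref{eq:coxHilb} are graded subrings of the $S_n$-invariants of $\kk[\mathbb{A}^{2n}][t^{\pm 1}]$ (where $\sigma\cdot t=\sign(\sigma)t$), and a homogeneous element of degree $d$ has the form $qt^d$ with $q$ a $\sign^d$-semi-invariant rational function. The two facts I would use repeatedly are that $A^mA^{m'}=A^{m+m'}$ and that a product of two alternants is symmetric, so $A^2\subseteq A^0$ and hence $A^m\subseteq A^{m-2}\subseteq A^{m\bmod 2}$. The inclusion of \eqref{eq:coxHilb} into the intersection is then routine: given a homogeneous element $pt^d$ of \eqref{eq:coxHilb} and any $f=at^m$, one checks that for $k\gg 0$ one has $pt^d\cdot f^k=pa^kt^{d+km}\in B$, because $pa^k\in A^{d+km}$ follows from the displayed containments (distinguishing the cases $d>0$, where $p\in A^d$, and $d\leq 0$, where $p\in A^{d\bmod 2}$).

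For the reverse inclusion, take a homogeneous $qt^d$ in the intersection. First I would prove that $q$ is a \emph{polynomial}: localising at $f=\delta_x t$ and at $f=\delta_y t$, where $\delta_x=\prod_{i<j}(x_i-x_j)$ and $\delta_y=\prod_{i<j}(y_i-y_j)$ are coprime alternants in $A^1$, yields $q\delta_x^{k},q\delta_y^{k'}\in \kk[\mathbb{A}^{2n}]$ for suitable $k,k'$; since $\gcd(\delta_x^{k},\delta_y^{k'})=1$ in the UFD $\kk[\mathbb{A}^{2n}]$, this forces $q\in \kk[\mathbb{A}^{2n}]$. For $d\leq 0$ this already finishes the job, since a polynomial $\sign^d$-semi-invariant is exactly an element of degree $d$ in $(\kk[\mathbb{A}^{2n}][t^{-1}])^{S_n}$, matching \eqref{eq:coxHilb}.

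The remaining case $d>0$ is where the real work lies: I must upgrade ``$q$ is a polynomial'' to ``$q\in A^d$''. The localisation condition at $\delta_x$ gives $q\delta_x^{k}\in A^{d+k}$ for some $k$, and combining the easy containment $A^{d+k}\subseteq I_{ij}^{\,d+k}$ (powers of the ideal $I_{ij}$ of the pairwise diagonal $\Delta_{ij}=\{x_i=x_j,\ y_i=y_j\}$) with $\operatorname{ord}_{\Delta_{ij}}\delta_x=1$ lets me conclude that $q\in I_{ij}^{\,d}$ for every $i<j$. The hard part, and the main obstacle, is the converse structural statement that a $\sign^d$-semi-invariant lying in $\bigcap_{i<j}I_{ij}^{\,d}$ must be a sum of products of $d$ alternants, that is, lies in $A^d$. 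This is precisely the input that recovers Haiman's computation of $\Gamma\big(\Hilb^{[n]}(\mathbb{A}^2),\mathcal{O}(d)\big)$ for $d>0$, and I would extract it not from Haiman's vanishing theorem but from Ginzburg's Lemma~\ref{lem:Ginzburg}, which identifies $\kk[\mathscr{M}_0]_d\cong A^d$, together with an extension argument: a weight-$d$ semi-invariant regular on the stable locus extends over $\mathscr{M}_0$ because $\GL(n)\cdot\boldsymbol{\varepsilon}(\mathbb{A}^{2n})$ is dense in $\mathscr{M}_0$, so that pullback along $\boldsymbol{\varepsilon}$ is injective on semi-invariants while its image on global sections is all of $A^d$. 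Establishing this positive-degree saturation cleanly, via Ginzburg rather than Haiman, is the crux of the argument.
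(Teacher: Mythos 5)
Your reduction is exactly the paper's: set $V=\mathscr{M}_0$ via Lemma~\ref{lem:Ginzburg}\two{} and the remark on irreducible components, apply Theorem~\ref{thm:Coxringisom} and Proposition~\ref{prop:sect2}, and compute $\bigcap_f B[f^{-1}]$ for $B=\bigoplus_{m\geq 0}A^mt^m$ inside $\kk(\mathbb{A}^{2n})[t^{\pm 1}]$ using \eqref{eq:COxHilb}. Your easy inclusion is fine, and your denominator-killing step (localise at the coprime alternants $\delta_x t$ and $\delta_y t$) is a clean variant of the paper's argument, which instead localises at $(f^2t^2)$ for $f=\prod_{\sigma}\sigma(f_0)$ an orbit product coprime to the denominator and derives a contradiction with unique factorisation; up to this point the two proofs coincide in substance. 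The negative-degree case is then complete in both treatments.

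The genuine gap is your case $d>0$, and it is located exactly where you flag it. The statement you need --- that a $\sign^d$-semi-invariant lying in $\bigcap_{i<j}I_{ij}^{\,d}$ belongs to $A^d$ --- is a Haiman-strength theorem (it is the hard ``polygraph'' saturation underlying his computation of $\Gamma(\mathcal{O}(d))$), and it does \emph{not} follow from the quoted Lemma~\ref{lem:Ginzburg}. Your proposed bridge is a non sequitur: density of $\GL(n)\cdot\boldsymbol{\varepsilon}(\mathbb{A}^{2n})$ in $\mathscr{M}_0$ gives injectivity of $\boldsymbol{\varepsilon}^*$ on semi-invariants, but it gives no extension of a function regular on $\mathscr{M}_0^\theta$ across the unstable locus, which contains a divisor of $\mathscr{M}_0$. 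Indeed any such weight-blind extension argument proves too much: for $\theta<0$ every semi-invariant vanishes on $\mathscr{M}_0$, yet the Cox ring \eqref{eq:coxHilb} is nonzero in negative degrees, so extension from the stable locus genuinely fails there. The paper avoids your detour through diagonal ideals entirely: it formulates the claim as the ring identity $\kk[\mathscr{M}_0^\theta]^{\SL}=\kk[\mathscr{M}_0]^{\SL}[t^{-2}]$ and, once the denominator is trivialised by the coprime orbit product, concludes within the localisations of $B$ itself, using only the elementary containments $A^{m+2}\subseteq A^m$ (a product of two alternants is symmetric) and the coprimality of the localising element with the denominator --- no intrinsic characterisation of $A^d$ by vanishing conditions along $\Delta_{ij}$, and hence no input from Haiman's vanishing theorem, is ever invoked. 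To repair your write-up, replace the $I_{ij}$-adic analysis in positive degree by this localisation bookkeeping: a homogeneous degree-$k$ element with trivial denominator is written as $gt^{k+2s}(f^2t^2)^{-s}$ with $g\in A^{k+2s}\subseteq A^k$, which places it in $B[t^{-2}]$ directly.
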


\begin{proof}
If we set $V := \mathscr{M}_0$ then, under the isomorphism \eqref{eq:COxHilb}, Proposition~\ref{prop:sect2} says that 
\[
    \Gamma[\mathscr{M}_0^{\theta}]^{\SL} = \bigcap_{f \in \bigcup_{m > 0} (A^m t^m) \setminus \{ 0 \} } \left( \bigoplus_{r \ge 0} A^r t^r \right)[f^{-1}].
\]

We claim that $\kk[\mathscr{M}_0^{\theta}]^{\SL} = \kk[\mathscr{M}_0]^{\SL}[t^{-2}]$. Let $f \in \kk[\mathscr{M}_0^{\theta}]^{\SL}$ be a non-zero homogeneous element of degree $k > 0$. Then $f = p t^k$, where $p \in A^k$. In particular, $p^2 \in A^{2k} \subset A^{2k-2}$. Hence, 
 \[
 t^{-2} = (p^2 t^{2k - 2}) f^{-2}
 \]
 belongs to $\kk[\mathscr{M}_0]^{\SL}[f^{-1}]$, and thus $\kk[\mathscr{M}_0]^{\SL}[t^{-2}] \subset \kk[\mathscr{M}_0^{\theta}]^{\SL}$.
 
We show the opposite inclusion. Let $m \in \kk[\mathscr{M}_0^{\theta}]^{\SL}$ be homogeneous. Since $\kk[\mathscr{M}_0^{\theta}]^{\SL}$ is a subring of $\kk(\mathbb{A}^{2n})[ \GGm]$, where $\kk(\mathbb{A}^{2n})$ is the field of rational functions on $\mathbb{A}^{2n}$, we may write $m = \frac{a}{b} t^k$ uniquely, where $a,b \in \kk[\mathbb{A}^{2n}]$ have highest common factor one and $k \in \ZZ$. If $b$ is not a unit then we choose some irreducible polynomial $f_0 \in \kk[\mathbb{A}^{2n}]$ such that the highest common factor of $\sigma(f_0)$ and $b$ is one, for all $\sigma \in S_n$. Set $f = \prod_{\sigma} \sigma(f_0)$. Then $f \in A^0$ and shares no factors with $b$. Since $f^2 t^2 \in A^2 t^2$, the element $m$ must belong to $\kk[\mathscr{M}_0]^{\SL}[(f^2 t^2)^{-1}]$ and we can write $m = g t^r (f^2 t^2)^{-s}$. But the equality 
$$
\frac{a}{b} t^k = \frac{g t^r}{(f^2 t^2)^{s}}
%\quad \textrm{in }$\mathrm{Frac} \, (P[t])$}
$$
in $\kk(\mathbb{A}^{2n})[ \GGm]$ contradicts uniqueness of factorisation because $b$ is not a unit. We deduce that $\kk[\mathscr{M}_0^{\theta}]^{\SL}$ is contained in $\kk[\mathscr{M}_0]^{\SL}[t^{-2}]$, proving the claim.  

As shown in \cite[Proposition~4.2]{BCS23}, work of McGerty--Nevins~\cite{QuiverKirwan} implies that for any $\theta > 0$ the integral version $\ell_C$ of the linearisation map is an isomorphism. Therefore, we may apply Theorem~\ref{thm:Coxringisom} to deduce that the Cox ring of $\Hilb^{[n]}(\mathbb{A}^2)$ is isomorphic to $\kk[\mathscr{M}_0^{\theta}]^{\SL} = \kk[\mathscr{M}_0]^{\SL}[t^{-2}]$. It remains to show therefore that $\kk[\mathscr{M}_0]^{\SL}[t^{-2}]$ is isomorphic to the graded ring \eqref{eq:coxHilb}. For this, let $m \in \kk[\mathscr{M}_0]^{\SL}[t^{-2}]$. By \eqref{eq:COxHilb}, $m$ is a linear combination of elements of the form $a t^{-2k}$ for $a \in A^r t^r$ and $k \ge 0$. Since $A^r \supset A^{r+2} \supset A^{r+4} \supset \cdots$ we see that $a t^{-2k}$ belongs to $A^{r - 2k} t^{r - 2k} \subset \kk[\mathscr{M}_0]^{\SL}$ if $r \ge 2k$. If $r - 2k < 0$ then $a t^{-2k} \in  (\kk[\mathbb{A}^{2n}][t^{-1}])^{S_n}$. Conversely, if $b t^{-\ell} \in (\kk[\mathbb{A}^{2n}][t^{-1}])^{S_n}$ then either $\ell$ is even so $b t^{-\ell} \in \kk[\mathbb{A}^{2n}]^{S_n}[t^{-2}] \subset \kk[\mathscr{M}_0]^{\SL}[t^{-2}]$, or $\ell$ is odd and 
\[
b t^{-\ell} = (b t) t^{-\ell-1} \in (A^1 t) [t^{-2}] \subset \kk[\mathscr{M}_0]^{\SL}[t^{-2}]
\]
as required. 
\end{proof}

\begin{remark}
 Almost by definition, the Cox ring of a Mori Dream Space $X$ allows one to recover $X$ as a GIT quotient of $\Spec \Cox(X)$ by the torus $\Hom(\Pic(X), \GGm)$, 
 where the stability parameter in $\Pic(X)$ is ample. 
 %\theta$ is chosen to lie in the ample cone of $\Pic(X)$. 
 When $X = \Hilb^{[n]}(\mathbb{A}^2)$, 
 Theorem~\ref{thm:CoxHilbert} implies that $\Hilb^{[n]}(\mathbb{A}^2)$ is Proj of the graded subring of \eqref{eq:coxHilb} generated by all homogeneous elements of non-negative degree, that is, 
    \[
    \Hilb^{[n]}(\mathbb{A}^2) \cong \mathrm{Proj} \, \bigoplus_{m \ge 0} A^m t^m.
    \]
    This recovers the Proj description of $\Hilb^{[n]}(\mathbb{A}^2)$ given by Haiman~\cite[Proposition~2.6]{HaimanCatalan}. 
\end{remark}

\begin{corollary}
\label{cor:linebundleglobal}
 For $m \ge 0$, we have $\Gamma\big(\!\Hilb^{[n]}(\mathbb{A}^2),\mathcal{O}(m)\big) \cong A^m$, and for $m < 0$, we have  \[
    \Gamma\big(\!\Hilb^{[n]}(\mathbb{A}^2),\mathcal{O}(m)\big) = \left\{ \begin{array}{cl}
    A^0 & \textrm{$m$ even} \\
    A^1 & \textrm{$m$ odd}. 
    \end{array} \right.
    \]
\end{corollary}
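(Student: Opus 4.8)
The plan is to read off each graded piece directly from the explicit description of the Cox ring furnished by Theorem~\ref{thm:CoxHilbert}. Since $\Pic(\Hilb^{[n]}(\mathbb{A}^2))$ modulo torsion has rank one with $\mathcal{O}(1)$ as generator, the definition of the Cox ring identifies $\Gamma(\Hilb^{[n]}(\mathbb{A}^2),\mathcal{O}(m))$ with the degree-$m$ graded component of the ring \eqref{eq:coxHilb}. Thus the entire statement reduces to isolating, for each $m\in\ZZ$, the degree-$m$ piece of
\[
(\kk[\mathbb{A}^{2n}][t^{-1}])^{S_n} \oplus \bigoplus_{m > 0} (\kk[\mathbb{A}^{2n}]_{\sign} t)^m.
\]

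First I would treat the case $m \ge 0$. The second summand is supported in strictly positive degrees, with degree-$m$ component equal to $A^m t^m$, while the first summand $(\kk[\mathbb{A}^{2n}][t^{-1}])^{S_n}$ lives in degrees $\le 0$ and contributes $A^0$ in degree $0$. Stripping off the factor $t^m$ then identifies the degree-$m$ component with $A^m$, with $A^0$ covering the case $m=0$, giving $\Gamma(\mathcal{O}(m)) \cong A^m$ as claimed.

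For $m < 0$ only the first summand contributes, and here the content lies in the sign bookkeeping. Writing a degree-$m$ element as $p\,t^{m}$ with $p \in \kk[\mathbb{A}^{2n}]$, invariance under the twisted action $\sigma(t)=\sign(\sigma)\,t$ forces $\sigma(p) = \sign(\sigma)^{m}\,p$ for all $\sigma \in S_n$. Since $\sign(\sigma) = \pm 1$, this condition depends only on the parity of $m$: when $m$ is even it reads $\sigma(p)=p$, so $p \in A^0$, and when $m$ is odd it reads $\sigma(p)=\sign(\sigma)\,p$, so $p \in A^1$. This produces the stated dichotomy.

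The argument is entirely formal once Theorem~\ref{thm:CoxHilbert} is available; the only point requiring care—and the nearest thing to an obstacle—is the parity computation for negative $m$, where one must check that $\sigma(t^{-1})$ acquires the factor $\sign(\sigma)$ rather than its inverse (these agree because $\sign(\sigma)^2=1$), so that the constraint on $p$ correctly toggles between $S_n$-invariance and sign-semi-invariance according to the parity of $m$.
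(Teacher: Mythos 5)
Your proposal is correct and is essentially the paper's own argument: the paper states Corollary~\ref{cor:linebundleglobal} without further proof as an immediate consequence of Theorem~\ref{thm:CoxHilbert}, precisely the degree-by-degree extraction from \eqref{eq:coxHilb} that you carry out. Your parity bookkeeping for $m<0$ (that invariance of $p\,t^{m}$ under $\sigma(t)=\sign(\sigma)\,t$ forces $\sigma(p)=\sign(\sigma)^{m}p$, toggling between $A^0$ and $A^1$) is exactly the content left implicit in the paper, and it is carried out correctly.
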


\begin{remark}
The description %identification 
of $\Gamma(\Hilb^{[n]}(\mathbb{A}^2),\mathcal{O}(m))$
%= A^m$ 
for $m \ge 0$ can also be deduced from deep results of Haiman~\cite{HaimanJAMS,HaimanVanishing}, as shown in \cite[Lemma~4.6]{GordonStaffordI}. 
\end{remark}

\begin{corollary}
    For all $m \ge 0$, we have that
     \[
    (A^m)^{\vee} = \left\{ \begin{array}{ll}
    A^0 & \textrm{$m$ even} \\
    A^1 & \textrm{$m$ odd}. 
    \end{array} \right.
    \]
    In particular, $A^m$ is reflexive if and only if $m = 0,1$. 
\end{corollary}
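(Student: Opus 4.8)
The plan is to compute the dual $(A^m)^\vee=\Hom_{A^0}(A^m,A^0)$, where $A^0=\kk[\mathbb{A}^{2n}]^{S_n}$ is the coordinate ring of the normal affine variety $Y:=\mathbb{A}^{2n}/S_n=\Spec A^0$, by working in the divisor class group of $Y$. Writing $R:=\kk[\mathbb{A}^{2n}]$, I would first note that each $A^m=(A^1)^m$ is a rank-one torsion-free $A^0$-submodule of $R$ sitting inside $A^0=R^{S_n}$ for even $m$ and inside $A^1=R_{\sign}$ for odd $m$, since a product of $m$ sign-semi-invariants is $S_n$-invariant or sign-semi-invariant according to the parity of $m$. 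The strategy is to identify the reflexive hull and the dual of each $A^m$ with one of $A^0,A^1$, and then to detect non-reflexivity for $m\ge 2$ by an elementary degree estimate.

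The key geometric input is that $S_n$ acts on $\mathbb{A}^{2n}$ with no pseudo-reflections: a nontrivial permutation fixes a linear subspace of even codimension, hence of codimension at least two. Thus the quotient map $\pi\colon\mathbb{A}^{2n}\to Y$ is \'etale in codimension one, $Y$ is singular exactly along the image of the big diagonal (of codimension two), and on the smooth locus $U\subseteq Y$, whose complement has codimension at least two, the restriction $\pi^{-1}(U)\to U$ is a free $S_n$-torsor. The sign character then defines a line bundle $\mathcal{L}$ on $U$ with $\mathcal{L}^{\otimes 2}\cong\mathcal{O}_U$; equivalently $\operatorname{Cl}(Y)\cong\Hom(S_n,\GGm)\cong\ZZ/2$, generated by the class of $A^1$. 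Restriction to $U$ identifies $A^0=\Gamma(U,\mathcal{O}_U)$ and $A^1=\Gamma(U,\mathcal{L})$, since functions extend across the codimension-two complement in the smooth ambient space. Because $\mathcal{L}$ is invertible on $U$, the product $A^m$ restricts there to $\mathcal{L}^{\otimes m}$, so $A^m$ agrees on $U$ with $A^0$ (for even $m$) or $A^1$ (for odd $m$) and therefore has the same reflexive hull. As duals of coherent sheaves on a normal variety are reflexive, hence determined by their restriction to $U$, and as $\mathcal{L}^{\otimes(-m)}\cong\mathcal{L}^{\otimes m}$, I would conclude $(A^m)^\vee=\Gamma(U,\mathcal{L}^{\otimes(-m)})=\Gamma(U,\mathcal{L}^{\otimes m})$, which equals $A^0$ when $m$ is even and $A^1$ when $m$ is odd. (This also matches $(A^m)^\vee\cong\Gamma(\Hilb^{[n]}(\mathbb{A}^2),\mathcal{O}(-m))$ as computed in Corollary~\ref{cor:linebundleglobal}.)

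For the reflexivity statement, the same computation gives $(A^m)^{\vee\vee}=\Gamma(U,\mathcal{L}^{\otimes m})$, namely $A^0$ for even $m$ and $A^1$ for odd $m$, with the canonical map $A^m\to(A^m)^{\vee\vee}$ realized as the literal inclusion $A^m\subseteq A^0$ (respectively $A^m\subseteq A^1$) inside $R$. For $m=0,1$ this is an equality, so $A^m$ is reflexive. For $m\ge 2$ I would show it is strict by a degree count: assuming $n\ge 2$, let $\delta_{\min}\ge 1$ be the least degree of a nonzero element of $A^1=R_{\sign}$ (positive, as there are no nonzero $S_n$-antisymmetric constants). Every nonzero element of $A^m=(A^1)^m$ has degree at least $m\,\delta_{\min}>\delta_{\min}\ge 0$, whereas $A^0$ contains the constant $1$ of degree $0$ and $A^1$ contains an element of degree $\delta_{\min}$; hence $A^m$ is a proper submodule of its reflexive hull and is not reflexive. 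This yields the stated dichotomy and shows reflexivity holds precisely for $m\in\{0,1\}$ (the case $n=1$ being degenerate, since then $A^m=R$ is free of rank one for every $m$).

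I expect the main point to be the class-group computation $\operatorname{Cl}(Y)\cong\ZZ/2$ — equivalently, the fact that $A^1$ represents a $2$-torsion divisor class, realized by the order-two line bundle $\mathcal{L}$ on the smooth locus. This is exactly what makes the dual insensitive to the sign of $m$ and collapses all of the $(A^m)^\vee$ onto $A^0$ and $A^1$; the remaining ingredients (reflexivity of duals on a normal variety, extension across a codimension-two locus, and the degree estimate securing strictness) are routine.
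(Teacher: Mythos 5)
Your proposal is correct, but it proves the corollary by a genuinely different route than the paper. The paper's proof stays on the Hilbert scheme side: it applies Grothendieck duality along the crepant Hilbert--Chow morphism $\sigma\colon \Hilb^{[n]}(\mathbb{A}^2)\to \mathrm{Sym}^n(\mathbb{A}^2)$ (both dualising complexes trivialise, as $X$ is smooth, $Y$ Gorenstein and $\sigma$ crepant), invokes Haiman's vanishing theorem to kill the higher direct images $\RR^i\sigma_*\mathcal{O}(d)$ for $d\geq 0$, obtains the identification \eqref{eq:dminueglobal} of $\RR^i\sigma_*\mathcal{O}_X(-d)$ with $\mathrm{Ext}^i_{A^0}(\Gamma(X,\mathcal{O}(d)),A^0)$, and then reads off the $i=0$ case against Corollary~\ref{cor:linebundleglobal}. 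You instead work entirely on the quotient singularity $\mathrm{Sym}^n(\mathbb{A}^2)$: since $S_n$ acts on $\mathbb{A}^{2n}$ without pseudo-reflections, the quotient is \'etale in codimension one, the sign character yields a $2$-torsion line bundle $\mathcal{L}$ on the smooth locus with $A^0=\Gamma(U,\mathcal{O}_U)$ and $A^1=\Gamma(U,\mathcal{L})$ (via Hartogs on $\mathbb{A}^{2n}$), duals are reflexive hence computed on $U$, and $\mathcal{L}^{\otimes(-m)}\cong\mathcal{L}^{\otimes m}$ collapses everything onto $A^0$ and $A^1$ by parity; a graded degree bound then shows $A^m\subsetneq (A^m)^{\vee\vee}$ for $m\geq 2$. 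What each approach buys: yours is elementary and self-contained, avoiding Haiman's deep vanishing theorem and the Hilbert scheme altogether, and it makes explicit two points the paper leaves implicit (the strictness of $A^m\subsetneq A^0$ or $A^1$ for $m\geq 2$, and the degenerate case $n=1$, where every $A^m$ is free and the ``only if'' fails, so the corollary tacitly assumes $n\geq 2$); the paper's duality argument, by contrast, fits the already-established toolkit (Corollary~\ref{cor:linebundleglobal} is in hand) and yields strictly more, namely the computation of all higher direct images $\RR^i\sigma_*\mathcal{O}_X(-d)$, of which the corollary is the $i=0$ shadow. One small point you should make explicit: the claim that the product $A^m$ sheafifies on $U$ to $\mathcal{L}^{\otimes m}$ uses the standard fact that, over the locus where $\pi$ is a free $S_n$-torsor, the multiplication maps between isotypic line bundles of $\pi_*\mathcal{O}$ are isomorphisms (e.g.\ by finding a sign-semi-invariant nonvanishing on a given fibre); this is routine but is the hinge on which $\mathcal{L}^{\otimes 2}\cong\mathcal{O}_U$ turns.
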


\begin{proof}
 This result follows from a standard argument based on Grothendieck duality, together with the vanishing theorem of Haiman~\cite{HaimanVanishing}. Indeed, write 
 \[
 \sigma\colon X:=\Hilb^{[n]}(\mathbb{A}^2)\longrightarrow Y:=\mathrm{Sym}^n(\mathbb{A}^2)
 \]
 for the Hilbert--Chow morphism. Note that $Y$ is Gorenstein, $X$ is non-singular, and $\sigma$ is crepant, so the dualising complexes on $X$ and $Y$ are given by $\mathcal{O}_X[2n]$ and $\mathcal{O}_Y[2n]$ respectively.
Therefore, the Grothendieck duality functors on $X$ and $Y$ are %given by 
$D_X(-) = \RR \Hom_X( -,\mathcal{O}_X)[2n]$ and $D_Y(-) = \RR \Hom_Y( -,\mathcal{O}_Y)[2n]$ respectively. Since Grothendieck duality commutes with pushforward along the proper morphism $\sigma$, this implies that $\RR \sigma_* \circ D_X = D_Y \circ \RR \sigma_*$.
 
Haiman's vanishing theorem \cite[Theorem~2.1]{HaimanVanishing} implies that $\RR^i \sigma_* \mathcal{O}(d) = 0$ for $d \ge 0$ and $i > 0$ because $\mathcal{O}(d)$ is a direct summand of the sheaf $P \otimes B^{\otimes l}$ of \textit{loc.\ cit.\ }for $l \gg 0$. Hence the Leray spectral
sequence associated to $D_Y \circ \RR \sigma_* \mathcal{O}(d)$ collapses on the second page. We deduce that 
\begin{equation}\label{eq:dminueglobal}
    \RR^i \sigma_* \mathcal{O}_X(-d) \cong \mathrm{Ext}_{A^0}^{i}\Big(\Gamma\big(X,\mathcal{O}(d)\big),A^0\Big).  
\end{equation}
Taking $i = 0$ in \eqref{eq:dminueglobal} and using Corollary~\ref{cor:linebundleglobal} establishes the corollary.
\end{proof}

\bibliographystyle{plain}

\def\cprime{$'$} \def\cprime{$'$} \def\cprime{$'$} \def\cprime{$'$}
  \def\cprime{$'$} \def\cprime{$'$} \def\cprime{$'$} \def\cprime{$'$}
  \def\cprime{$'$} \def\cprime{$'$} \def\cprime{$'$} \def\cprime{$'$}
  \def\cprime{$'$} \def\cprime{$'$}

 \end{document}